\documentclass[11pt,reqno]{amsart}

\usepackage{graphicx}
\usepackage{mathrsfs}
\usepackage{color}
\usepackage{comment}
\usepackage{amssymb}
\usepackage{amsmath}
\usepackage{mathabx}
\usepackage{amscd}
\usepackage{physics}
\usepackage{mathtools}
\usepackage{comment}
\usepackage{tikz, tikz-cd}
\usepackage[margin = 1in]{geometry} 
\usepackage[colorlinks=true, urlcolor=blue, citecolor=blue, linkcolor=blue]{hyperref}

\usepackage{enumitem}

\usepackage{microtype} 
\usepackage{lmodern}
\usepackage{csquotes}
\usepackage{nicefrac}

\calclayout

\let\oldtocsection=\tocsection
\let\oldtocsubsection=\tocsubsection
\let\oldtocsubsubsection=\tocsubsubsection

\renewcommand{\tocsection}[2]{\hspace{0em}\oldtocsection{#1}{#2}}
\renewcommand{\tocsubsection}[2]{\hspace{3em}\oldtocsubsection{#1}{#2}}
\renewcommand{\tocsubsubsection}[2]{\hspace{6em}\oldtocsubsubsection{#1}{#2}}

\numberwithin{equation}{section}
\newtheorem{theorema}{Theorem}

\theoremstyle{definition}
\newtheorem{theorem}{Theorem}[section]
\newtheorem{lemma}{Lemma}[section]
\newtheorem{prop}{Proposition}[section]
\newtheorem*{theorem*}{Theorem}
\newtheorem{corollary}{Corollary}[section]

\newtheorem{definition}{Definition}[section]

\newtheorem{remark}{Remark}[section]

\newcommand{\cons}{\nabla^{\Sigma}}
\newcommand{\conw}{\nabla^{W}}

\newcommand{\scal}{\mathrm{Sc}}
\newcommand{\scs}{\mathrm{Sc}_{\Sigma}}
\newcommand{\sw}{\mathrm{Sc}_W}
\newcommand{\sh}{\mathrm{Sc}_h}
\newcommand{\mc}{\mathrm{H}_{\Sigma}}
\newcommand{\n}{\mathbf{n}}
\newcommand{\inner}[2]{\langle #1,#2  \rangle}

\newcommand{\dw}{D_W}
\newcommand{\daps}{D_W^{\mathrm{APS}}}
\newcommand{\dapsp}{D_{\mathrm{APS}}^{+}}
\newcommand{\dapsm}{D_{\mathrm{APS}}^{-}}

\newcommand{\ind}{\mathrm{Ind}}
\newcommand{\indaps}{\mathrm{Ind}_{\mathrm{APS}}}
\newcommand{\haps}{\mathcal{H}^-}
\newcommand{\kpt}{K3\backslash\{p\}}
\newcommand{\QQ}{\mathbf{Q}}
\newcommand{\RR}{\mathbf{R}}

\DeclareMathOperator{\sff}{II}

\begin{document}

\title{Spin and scalar curvature on punctured four manifolds}
\author{Aditya Kumar}
\address{Department of Mathematics, University of Maryland, College
Park, MD 20742, USA}
\email{akumar65@umd.edu}
\author{Boyu Zhang}
\address{Department of Mathematics, University of Maryland, College
Park, MD 20742, USA}
\email{bzh@umd.edu}

\begin{abstract}
    We show that a closed spin four-manifold with nonzero $\widehat A$-genus, even after being punctured, does not admit a complete metric with uniformly positive scalar curvature. In particular, the punctured $K3$ surface does not admit such a metric. This is known to be false in higher dimensions.  A key new observation,  which could be of independent interest, is that Gromov's $\mu$-bubbles are a convenient class of boundaries for the APS boundary value problem for the Dirac operator. 
\end{abstract} 
\maketitle
\tableofcontents
\section{Introduction}\label{sec:intro}

It follows from Gromov's $h$-principle for open manifolds that every non-compact smooth manifold admits a Riemannian metric of positive scalar curvature (abbreviated to psc henceforth). Such a metric, however, is not necessarily complete. A substantially more difficult problem is to determine which non-compact smooth manifolds admit complete Riemannian metrics whose scalar curvature is bounded below by a uniform positive constant. The problem was considered in \cite{gl84}, and has since received quite a bit of attention. The case of non-compact $3$-manifolds was addressed in \cite{cwy10, bbm11,wang}, the case of non-compact manifolds with a torus connected summand in dimensions at most $7$ was addressed in \cite{luy24, cl24} and this was extended to aspherical summands recently in \cite{ccz23}. See also, \cite{cwy17, cwy20, cmm24,simone}.

It is observed by Gromov in \cite[\S 11.12 following Conjecture D']{geo18} that a simply connected manifold $X^n$ after puncture contracts to an $(n-2)$ dimensional subpolyhedron, ``\textit{which, most probably implies that} $X\backslash\{x_0\}$ \textit{admits a complete metric with} $\scal >0$". In dimensions $n \geq 5$, this is indeed the case \cite{stolz,rw23}. This is also trivially true for $n=3$ as, by the resolution of the Poincar\'e conjecture, $\RR^3$ is the only possibility. 

For $n=4$, the classical test case is the $K3$ surface which does not admit a psc metric is the $K3$ surface, due to Lichnerowicz \cite{lich}. In \cite{cmm24}, the authors proved that there is some exotic punctured $K3$ surface that does not admit a complete uniformly psc metric. However, the standard case is unknown\cite[Remark 1.6]{cmm24}; it also appears in the K3 Problem List for Low Dimensional Topology \cite[Problem 4.128, Remark (6)]{k3}. See also \cite[Theorem C (ii)]{frenk}, where it is shown that $K3$ with a ball removed admits a psc metric with mean convex boundary. 

We answer this question negatively. More generally, we show that the $\widehat A$-obstruction survives puncturing in dimension four.\footnote{We note that the same argument also works for multiple punctures.}  
\begin{theorema}\label{maintheorem}
Let $X$ be a closed spin four-manifold with $\widehat A(X) \neq 0$. Then $X \backslash \{pt\}$ does not admit a complete Riemannian metric with $\scal \geq \kappa^2$.    
\end{theorema}

\subsection{Overview of the argument} One starts with a complete metric with $\scal \geq \kappa^2$ on $X \backslash \{p\}$. The natural approach is to first cut the end along a $\mu$-bubble $\Sigma$, giving a compact manifold $W$ containing all topology of $X$ in the interior and with boundary $\Sigma$. Then, one would try to construct a spin psc filling of $\Sigma$, giving a psc metric on $X$. This would contradict the fact that $\widehat A(X) \neq 0$. However, implementing this approach is challenging because, \textit{a priori} $\Sigma$ need not be an $S^3$, and constructing a psc filling for a psc three-manifold in general is a difficult question because one does not know all the possible psc three-manifolds that smoothly embed into $S^4$; cf. \cite[\S 5.2]{li2026}. 

Our first observation is that a psc filling is not necessary.  It is enough to construct a spin filling $C$ of $\Sigma$ with vanishing index and signature (\S \ref{sec:filling}), as then by the index gluing formula $\widehat A(X) = \indaps(W,g)$. If the right side vanishes, we get a contradiction. Our second observation is that the right side vanishes when the hypersurface $\Sigma$ satisfies a positivity condition on the first eigenvalue of a modified stability operator \ref{def:modstab}. In particular, for complete uniformly psc metrics, the $\mu$-bubble on the end satisfies this condition. 

This second step above is interesting because usually, for the vanishing of $\indaps(W,g)$ with $\sw \geq \kappa^2$, one needs the boundary $\Sigma$ be mean convex. We replace that with a weaker spectral condition (cf. Theorem \ref{thm:indexwvanish}) that is automatically satisfied by $\mu$-bubbles as long as the end is long enough. Also see remark \ref{rem:defn}.

We also note here that dimension four enters in two ways. One, we use the classification of closed psc three-manifolds. Second, the arguments establishing the vanishing of spin fillings of $\Sigma$ rely on Seiberg-Witten theory (cf. Proposition \ref{prop_Froyshov-Baraglia}). 

\subsection{Stabilisation and $S^1$-stability} The $S^1$-stability conjecture of Rosenberg \cite[Conjecture 1.24]{ros07} predicts that a closed manifold $M \times S^1$ admits a psc metric if and only if $M$ does. In dimension four, it is known to be false due to examples based on Seiberg-Witten theory \cite[Remark 1.25]{ros07}. Nonetheless, the first two authors showed in a previous work that in dimension four, the failure of $S^1$ stability can be seen by stabilisations by $\# (S^2\times S^2)$. Writing $H= S^2 \times S^2$, we showed that $M^4\times S^1$ admits a psc metric iff for some $k\geq 0$, $M\#^kH$ does.  
By Theorem \ref{maintheorem}, we now know that $\kpt$ does not admit a complete uniformly psc metric. Interestingly, it is shown in \cite{rw23} that $\kpt \times T^2$ does admit a complete uniformly psc metric. In light of Theorem \ref{maintheorem} and the preceding discussion, this suggests that both $\kpt \times S^1$ and $\kpt \#^{\infty} H$ admit a complete uniformly psc metric.\footnote{Note that a finite number of stabilisations can not work by Theorem \ref{maintheorem} as $\widehat A(K3 \#^k H)=2$ for any finite $k$.} Indeed, in a new version of \cite{ks25} we confirm this expectation. Write $N \coloneqq \kpt$, then we prove that $N\#^{\infty} H$ as well as $N\times S^1$ admit complete uniformly psc metrics. The argument for $N\#^{\infty} H$ is quite short: since $K3\#\overline{K3} \approx_{diff} 22H$, a Mazur swindle argument gives the result. For $N \times S^1$, the arguments of \cite{ks25} applied to $N\#^{\infty} H$ allow one to conclude\footnote{Full details will be provided in a new version of \cite{ks25}.}.

This discussion explains the necessity of using Seiberg-Witten theory in the part of our proof establishing that the index on the end is also vanishing. In light of the above discussion and stabilisation result, we believe that there exists some smooth structure on $\kpt$ that admits a complete uniformly psc metric. 

\subsection{Organisation} Section \ref{sec:stability} defines the constructs the modified stability operator $\mathcal{L}_{\Sigma}$ and constructs $\mu$-bubbles with $\lambda_1(\mathcal{L}_{\Sigma}) > 0$. Section \ref{sec:bdrydirac} recalls basic notions of boundary value problems for the Dirac operator and establishes that the index of the compact core vanishes for boundaries with $\lambda_1(\mathcal{L}_{\Sigma}) > 0$. Section \ref{sec:filling} establishes the vanishing of the index for spin fillings of psc hypersurfaces in a four-ball. In Section \ref{sec:final} we combine the results to prove Theorem \ref{maintheorem}.

\subsection*{Acknowledgements} A.K. owes tremendous intellectual debt to the several conversations with Balarka Sen over the years during which we thought about this problem. Even though the ideas in this work can be traced to those discussions, he declined to be a coauthor. A.K. also wishes to thank Jonathan Rosenberg for several helpful discussions, Bernhard Hanke for helpful comments, and Shmuel Weinberger for his interest in this work. B.Z. thanks Chao Li for many valuable discussions over the years, as well as for their earlier collaborations on related topics. 
B.Z. is partially supported by NSF grants DMS-2540516, DMS-2405271, and a travel grant from the Simons Foundation.

\subsubsection*{AI statement} No AI tools were used for any ideas, computations, or writing in this paper. 

\section{Modified stability operator and $\mu$-bubbles} \label{sec:stability}

Let $\Sigma$ be a closed two-sided hypersurface in $(W,g)$ with unit normal $\n$, second fundamental form $\sff$, and mean curvature $\mc$. For our application, it is convenient to consider the following operator associated to $\Sigma$.

\begin{definition} \label{def:modstab}
    The modified stability operator of $\Sigma$ is \begin{equation}
        \mathcal{L}_{\Sigma} \coloneqq  -\Delta_{\Sigma} + \frac{1}{2}(\scs - \mc^2).
    \end{equation}
\end{definition}
\begin{remark}\label{rem:defn}
    We briefly explain the motivation for this definition. In the usual Schoen-Yau conformal descent argument, if the ambient manifold has $\scal>0$, and a hypersurface satisfies $\lambda_1(-\Delta_{\Sigma} + \scs/2)>0$, then by a comparison with the conformal Laplacian, this implies that $\Sigma$ admits a metric with $\scs >0$. Obviously, $\lambda_1(\mathcal{L}_\Sigma) > 0$ also implies that. However, we further observe that $\lambda_1(\mathcal{L}_\Sigma) > 0$ is also a sufficient condition for showing that the right-hand side of the boundary Lichnerowicz formula \eqref{eq:bdrylich} is non-positive for spinors satisfying the APS boundary condition. As a consequence, one gets that hypersurfaces satisfying  $\lambda_1(\mathcal{L}_\Sigma) > 0$, in particular Gromov's $\mu$-bubbles, are a natural class of boundaries for the APS boundary value problem; see $\S$\ref{subsec:vanish}. 
\end{remark}

\begin{prop}\label{prop:spectralgap}
Let $M$ be a closed manifold of dimension $3 \leq n\leq 7$ such that $M \backslash\{p\}$ has a complete metric $g$ with $\scal_g\geq\kappa^2$. Then, every coordinate ball $B$ about $p$ contains a smooth closed embedded hypersurface $\Sigma \subset B \backslash \{p\}$ such that $\lambda_1(\mathcal{L}_{\Sigma})> 0.$

\end{prop}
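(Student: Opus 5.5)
The plan is to produce $\Sigma$ as a Gromov $\mu$-bubble in a long band inside $B\backslash\{p\}$, and then read off $\lambda_1(\mathcal{L}_\Sigma)>0$ from the second variation. The key point is that $\mathcal{L}_\Sigma$ is built so that the $\mc^2$ term produced by the Gauss equation is exactly absorbed.

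\emph{Step 1 (a long band).} Since $g$ is complete on $M\backslash\{p\}$, the point $p$ is at infinite distance. Fix a smaller closed coordinate ball $B'\subset B$ about $p$. Let $\rho$ be a smoothing of $d_g(\cdot,\partial B')$ on $B'\backslash\{p\}$ with $|\nabla\rho|\le 2$. Then, for any $L>0$, the region $V=\{0\le\rho\le L\}$ is a compact band in $B'\backslash\{p\}\subset B\backslash\{p\}$. Its two boundary pieces $\partial_-V$ (near $\partial B'$) and $\partial_+V$ (toward $p$) separate $p$ from $\partial B$.

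\emph{Step 2 (the weight $h$).} Take $h=\eta\circ\rho$ with $\eta$ defined on $(0,L)$, $\eta\to+\infty$ at $0$ and $\eta\to-\infty$ at $L$, and
\[
2|\eta'|<\tfrac12\kappa^2+\tfrac{\eta^2}{2(n-1)}.
\]
A rescaled cotangent, $\eta(t)=a\cot(bt)$ with $b=\pi/L$ and $ab$ small compared to $\kappa^2$ and $a/(n-1)$, achieves this once $L$ is large enough; this is where the infinite length of the end is used. The constant $2$ on the left accounts for $|\nabla\rho|\le2$. Minimize
\[
\mathcal{A}(\Omega)=\mathcal{H}^{n-1}(\partial^*\Omega)-\int_V h\,(\chi_\Omega-\chi_{\Omega_0})
\]
over Caccioppoli sets $\Omega\subset V$ with $\Omega\triangle\Omega_0$ compactly contained in the interior of $V$. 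The blow-up of $h$ gives the usual barriers, so a minimizer exists and stays away from $\partial V$. For $3\le n\le 7$, the standard regularity theory (as in Chodosh--Li and Zhu) gives that $\Sigma=\partial\Omega\cap\mathrm{int}V$ is a smooth closed embedded hypersurface. It lies in $B\backslash\{p\}$ and separates $p$ from $\partial B$.

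\emph{Step 3 (stability $\Rightarrow$ spectral gap).} The first variation gives $\mc=h|_\Sigma$. For any $\varphi\in C^\infty(\Sigma)$, the second variation gives
\[
\int_\Sigma|\nabla\varphi|^2-(|\sff|^2+\mathrm{Ric}(\n,\n)+\partial_\n h)\varphi^2\ge0.
\]
By the traced Gauss equation, $|\sff|^2+\mathrm{Ric}(\n,\n)=\tfrac12(\scal-\scs+\mc^2+|\sff|^2)$. Substituting, and subtracting $\tfrac12\mc^2\varphi^2$ from both sides, yields
\[
\int_\Sigma|\nabla\varphi|^2+\tfrac12(\scs-\mc^2)\varphi^2\ \ge\ \int_\Sigma\big(\tfrac12\scal+\tfrac12|\sff|^2+\partial_\n h\big)\varphi^2 .
\]
Now use $|\sff|^2\ge \mc^2/(n-1)=h^2/(n-1)$, $\scal\ge\kappa^2$ and $|\partial_\n h|\le 2|\eta'|$. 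The integrand on the right is then bounded below by $\tfrac12\kappa^2+\tfrac{h^2}{2(n-1)}-2|\eta'(\rho)|$, which is strictly positive by Step 2. Since $\Sigma$ is compact and lies in $\{0<\rho<L\}$, this bound is uniformly positive on $\Sigma$, so $\lambda_1(\mathcal{L}_\Sigma)>0$ on every component of $\Sigma$.

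\emph{Main obstacle.} The delicate step is Step 2 together with the bookkeeping in Step 3. A naive weight with $|\nabla h|<\kappa^2/2$ cannot blow up at both ends of a finite band. So one must genuinely exploit the $|\sff|^2\ge\mc^2/(n-1)$ term, which is available because the $\mc^2$ in $\mathcal{L}_\Sigma$ cancels only the Gauss-equation $\mc^2$ and not $|\sff|^2$. This makes the ODE Riccati-type with a cotangent solution on a finite interval, and the required band length is then a constant depending on $\kappa$ and $n$. I would first check the sign convention for $\n$ versus $\partial_\n h$. I would also check that the smoothing of $\rho$ keeps the gradient bound, so that the strict inequality survives.
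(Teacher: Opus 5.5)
Your proposal is correct and follows essentially the same route as the paper. Both arguments build a long band from completeness, use a cotangent weight $h=a\cot(\pi\rho/L)$ and the $\mu$-bubble existence theorem, and rewrite the second variation via the traced Gauss equation so that only $|\sff|^2\ge \mc^2/(n-1)$ is needed to absorb $|\nabla h|$. The differences are minor: the paper uses $|\nabla\rho|\le 1$ and gets the explicit bound $\lambda_1(\mathcal{L}_\Sigma)\ge \kappa^2/2-2(n-1)\pi^2/L^2$, while you use $|\nabla\rho|\le 2$ and compactness of $\Sigma$.

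There is one small slip in Step 2. ``$ab$ small compared to $a/(n-1)$'' is not the right requirement. What you actually need is $2ab\le a^2/(2(n-1))$, i.e.\ $a\ge 4(n-1)\pi/L$, together with $2ab<\kappa^2/2$. Taking $a=4(n-1)\pi/L$, both hold once $L>4\pi\sqrt{n-1}/\kappa$.
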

\begin{proof}
By completeness of $g$, the distance function $\mathrm{dist}_g(\cdot, \partial B) \to \infty$ at $p$. Therefore, there is a compact band $V \coloneqq  S^{n-1}\times L \subset B\backslash \{p\}$ for all $L>0$; in particular fix $L > 2\pi\sqrt{n-1}/\kappa$. Let $\rho: V \to [0,L]$ be a smoothening of $\mathrm{dist}_g$ such that $|\nabla \rho| \leq 1$. We take the mean curvature prescribing function $$h = \frac{2\pi(n-1)}{L}\cot(\pi \frac{\rho}{L}).$$
Then, $h \to \pm \infty$ on the respective boundary components of $V$. The $\mu$-bubble existence theorem \cite[Proposition 12]{cl24} gives a smooth closed embedded hypersurface $\Sigma \subset V$ with mean curvature $\mc = h$, separating $p$ from $\partial B$. We shall denote by $C$, the compact region in $M$, containing $p$ bounded by $\Sigma$ and its complement by $W$. 

From the second variation formula, we have the following for every test function $f$ \cite[Lemma 14]{cl24} $$\int_{\Sigma} |\nabla f|^2+\frac{1}{2}(\scs-\mc^2)f^2 \geq \int_{\Sigma} (\scal_g/2 + |\sff|^2/2 + \n(h))f^2. $$
Since, $\dim \Sigma = n-1$, we have $|\sff|^2 \geq \mc^2/(n-1)=h^2/(n-1)$. Therefore, the integrand on the right-hand side can be estimated as
$$\frac{1}{2}\scal_g+\frac{1}{2}|\sff|^2+\n(h) \geq \frac{\kappa^2}{2}+\frac{h^2}{2(n-1)}-|\nabla h| \geq \frac{\kappa^2}{2} - \frac{2(n-1)\pi^2}{L^2},$$
here the last inequality used $|\nabla \rho| \leq 1$. Therefore, we have $$\lambda_1(\mathcal{L}_\Sigma) \geq  \frac{\kappa^2}{2} - \frac{2(n-1)\pi^2}{L^2}.$$
Since the metric is complete, $L$ can be arbitrarily large. In particular for any $L > 2\pi\sqrt{n-1}/\kappa$, we have $\lambda_1(\mathcal{L}_\Sigma)>0$. 
\end{proof}
\begin{corollary}
    The metric induced on $\Sigma$ is conformal to a psc metric. 
\end{corollary}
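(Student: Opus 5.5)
The plan is the standard conformal-Laplacian comparison. Let $m = n-1 = \dim\Sigma$, and write the conformal Laplacian of $(\Sigma, g_\Sigma)$ as
\[
L_\Sigma = -\frac{4(m-1)}{m-2}\Delta_\Sigma + \scs \quad (m\ge 3), \qquad \text{or} \qquad -\Delta_\Sigma + \tfrac12 \scs \quad (m=2).
\]
The key observation is that $\lambda_1(\mathcal{L}_\Sigma)>0$ gives positivity of the simpler operator $-\Delta_\Sigma + \tfrac12\scs$. Indeed, for any test function $f$,
\[
\int_\Sigma |\nabla f|^2 + \tfrac12\scs f^2 \;\ge\; \int_\Sigma |\nabla f|^2 + \tfrac12(\scs - \mc^2) f^2 \;\ge\; \lambda_1(\mathcal{L}_\Sigma)\int_\Sigma f^2,
\]
since $\mc^2\ge 0$. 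Hence $\lambda_1(-\Delta_\Sigma + \tfrac12\scs)>0$.

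Next we pass to the conformal Laplacian. For $m\ge 3$ we have $\frac{4(m-1)}{m-2}\ge 2$, so
\[
\int \tfrac{4(m-1)}{m-2}|\nabla f|^2 + \scs f^2 \;\ge\; 2\int |\nabla f|^2 + \tfrac12\scs f^2 \;>\; 0
\]
for $f\neq 0$, and therefore $\lambda_1(L_\Sigma)>0$. Let $u>0$ be the first eigenfunction; it can be taken positive by the standard argument for the first eigenfunction of a Schrödinger operator on a closed manifold, applied on each component of $\Sigma$. The metric $u^{4/(m-2)}g_\Sigma$ then has scalar curvature
\[
u^{-\frac{m+2}{m-2}}L_\Sigma u = \lambda_1 u^{-\frac{4}{m-2}} > 0 .
\]

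In the surface case $m=2$ (the case $n=3$), the same argument doesn't apply because the conformal Laplacian degenerates. Instead, integrate the inequality against $f\equiv 1$ on each component. This gives $\int_\Sigma \scs >0$, so each component is a sphere by Gauss–Bonnet. The uniformization theorem then gives a round metric in the conformal class.

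No step is a serious obstacle. The only care needed is to treat components of $\Sigma$ separately, and to check that the constant $\frac{4(m-1)}{m-2}$ dominates $2$, which it does for all $m\ge 3$; for the case relevant to the paper, $m=3$, the constant is $8$.
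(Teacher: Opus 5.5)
Your proof is correct and follows the paper's route: the paper's one-line proof is exactly the comparison $0<\lambda_1(\mathcal{L}_\Sigma)<\lambda_1(L_\Sigma)$ with the conformal Laplacian. You make this explicit by dropping $\mathrm{H}_\Sigma^2\ge 0$, using $\frac{4(m-1)}{m-2}\ge 2$, and conformally rescaling by the positive first eigenfunction on each component. Your separate treatment of $m=2$ via Gauss--Bonnet and uniformization is a useful addition, since the conformal-Laplacian comparison does not literally apply there; the paper only needs $m=3$.
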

\begin{proof}
    Follows by comparison with the conformal Laplacian $L_\Sigma$ as $ 0< \lambda_1(\mathcal{L}_{\Sigma})< \lambda_1(L_{\Sigma})$.
\end{proof}

\section{The boundary Lichnerowicz identity and the APS index} \label{sec:bdrydirac}

In this section we will recall the boundary Lichnerowicz identity as well as the notion of the Atiyah-Patodi-Singer(APS) index. We will then use the modified stability operator to prove that the index vanishes on a domain with a psc metric whose boundary satisfies $\lambda_1(\mathcal{L}_{\Sigma})>0$.

\subsection{Notation and conventions} Let $(W^n,g)$ denote a compact Riemannian spin manifold with smooth boundary $\Sigma$. We will follow the setup of \cite[{\S 2}]{bar26}. We write $S_W$ for the complex spinor bundle, $\conw$ for the spin connection, and $\dw$ for the Dirac operator. A spinor $\psi$ is harmonic if $\dw \psi =0$, and $\phi = \psi_{\Sigma}$ denotes its restriction to the boundary.  $\mc$ denotes the mean curvature of $\Sigma$ with respect to the outward unit normal $\n$.\footnote{We normalise mean curvature as the sum of the principal curvatures. } 
If $n$ is odd, then $S_W|_{\Sigma}$ is canonically identified with the spinor bundle $S_{\Sigma}$ of $(\Sigma,g_{\Sigma})$; if $n$ is even then with $S_{\Sigma} \oplus S_{\Sigma}$, and the two summands being $S_W^{\pm}|_{\Sigma}$. If we denote the Dirac operator of $\Sigma$ by $D_{\Sigma}$, then the \textit{boundary Dirac operator} is 
\begin{equation}
    A \coloneqq D_{\Sigma} \qquad n\text{ is odd}, \qquad A\coloneqq \biggl( \begin{matrix} D_{\Sigma} & 0 \\ 0 & -D_{\Sigma} \end{matrix} \biggr) \qquad n \text{ is even}.
\end{equation}

It is a self-adjoint elliptic operator on the closed manifold $\Sigma$ with discrete real spectrum, it anti-commutes with $c(\n)$ \cite[Proposition 2.3]{bar96}, and it satisfies the Lichnerowicz formula \begin{equation*}
    A^2 = (\cons)^*\cons + \frac{1}{4}\scs,
\end{equation*}
where $\cons$ is the spin connection on $\Sigma$. The boundary Dirac operator $A$ is connected to the ambient geometry by the following relation along $\Sigma$ \cite[Proposition 2.2]{bar96}
\begin{equation}\label{eq:bdrydirac}
    A =  \frac{\mc}{2} + c(\n)\dw +\conw_{\n}.
\end{equation} 
In particular, $A$ is an adapted boundary operator in the sense of \cite{bb16}.

\subsection{The boundary identity} The following identity is central to our argument. 

\begin{prop}
    Every harmonic spinor $\psi$ on $W$ with boundary trace $\phi$ on $\Sigma$, satisfies \begin{equation}\label{eq:bdrylich}
    \int_W |\conw \psi|^2+\frac{1}{4}\sw|\psi|^2 = \int_\Sigma  \inner{(A-\mc/2)\phi}{\phi} . 
\end{equation}
\end{prop}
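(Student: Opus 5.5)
The plan is to integrate the Lichnerowicz formula $\dw^2 = (\conw)^*\conw + \frac14 \sw$ against $\psi$ over $W$, and then rewrite the resulting boundary term using the relation \eqref{eq:bdrydirac}. Since $\dw\psi = 0$, we have pointwise
\[
0 = \inner{\dw^2\psi}{\psi} = \inner{(\conw)^*\conw\psi}{\psi} + \tfrac14\sw|\psi|^2 .
\]
Integrate over $W$ and use the divergence theorem in the form
\[
\int_W \inner{(\conw)^*\conw\psi}{\psi} = \int_W |\conw\psi|^2 - \int_\Sigma \inner{\conw_{\n}\psi}{\psi},
\]
where $\n$ is the outward normal. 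This comes from applying the divergence theorem to the vector field $X$ dual to the one-form $Y\mapsto \inner{\conw_Y\psi}{\psi}$ (real part understood throughout). Combining the two displays gives
\[
\int_W |\conw\psi|^2 + \tfrac14\sw|\psi|^2 = \int_\Sigma \inner{\conw_{\n}\psi}{\psi}.
\]

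It remains to identify the boundary integrand. Restrict \eqref{eq:bdrydirac} to $\Sigma$ and apply it to $\phi=\psi|_\Sigma$. Since $\dw\psi=0$, the term $c(\n)\dw\psi$ drops out, leaving
\[
\conw_{\n}\psi = A\phi - \tfrac{\mc}{2}\phi \quad\text{on } \Sigma.
\]
Substituting this into the boundary integral yields exactly \eqref{eq:bdrylich}.

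Two checks are needed. First, $\inner{A\phi}{\phi}$ is real pointwise after integration, since $A$ is self-adjoint; taking real parts throughout is in any case harmless. Second, the sign conventions must be consistent: the sign in the divergence theorem must match the outward $\n$ used in \eqref{eq:bdrydirac}, and the mean curvature normalisation must be the one fixed in the footnote, namely the sum of principal curvatures. I expect this sign and convention bookkeeping relative to \cite{bar96} to be the only real obstacle. The argument also needs $\psi$ smooth up to the boundary. If $\psi$ is merely $H^1$, I would first prove the identity for smooth spinors and then pass to the limit using the trace theorem.
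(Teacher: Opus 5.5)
Your proposal is correct and is the paper's proof: integrate the Lichnerowicz formula by parts to get $\int_W |\conw\psi|^2+\frac14\sw|\psi|^2=\int_\Sigma\langle\conw_{\n}\psi,\psi\rangle$, then use \eqref{eq:bdrydirac} with $\dw\psi=0$ to replace $\conw_{\n}\psi$ by $(A-\mc/2)\phi$. Your regularity remark goes beyond the paper, which only applies the identity to APS kernel spinors that are smooth up to the boundary; if you do approximate, note that smooth approximants are not harmonic, so you would pass to the limit in the general identity, which has an extra term $-\int_W|\dw\psi|^2$ on the left.
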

\begin{proof}
    Since $\dw \psi =0$, we use integration by parts on the Lichnerowicz formula for $\dw$. This gives $$\int_W |\conw \psi|^2+\frac{\sw}{4}|\psi|^2 = \int_\Sigma  \inner{\conw_\n \psi}{\psi}. $$
But, since $\dw \psi=0$, we have $\conw_\n \psi = (A-\mc/2)\phi$ al/ong $\Sigma$ by \eqref{eq:bdrydirac}.
\end{proof}

If $\sw > 0$, then the left-hand side of \eqref{eq:bdrylich} is positive for every $\psi \neq 0$. The rest of the section is about the sign of the right-hand side. 

\subsection{The APS boundary condition} Since $A$ is a self-adjoint elliptic operator on the closed manifold $\Sigma$, its eigenspinors form an orthonormal basis of the Hilbert space $L^2(\Sigma, S_W|_{\Sigma})$. We shall denote its negative spectral subspace by $\haps$, i.e., the closed span of eigenspinors with negative eigenvalues. Let $P_{\geq 0}$ denote the projection onto the complement of $\haps$. Then, the harmonic spinor boundary value problem under the Atiyah-Patodi-Singer(APS) boundary condition \cite{aps} is, 
\begin{equation*}
\begin{cases}
    \dw \psi = 0 \qquad \text{in } W \\
    P_{\geq 0}\phi=0 \qquad \text{on } \Sigma.
\end{cases}
\end{equation*}
Here, $\psi \in H^1(W,S_W)$ and its boundary trace $\phi \in H^{1/2}(\Sigma,S_W|_{\Sigma})$. The boundary condition demands that $\phi$ actually lie in $\haps$. This is motivated by the fact that the half-cylinder $\Sigma \times [0,\infty)$ with product metric attached to $W$ by $\Sigma \times \{0\}$ gives \eqref{eq:bdrydirac} gives $\dw = c(\n)(\partial_t - A)$, so an eigenspinor $A\phi = \lambda\phi$ has harmonic extension $e^{\lambda t}\phi$, which is in $L^2$ exactly when $\lambda <0$. 

We shall denote the resulting operator by $\daps$. It is Fredholm, and spinors in its kernel are smooth up to the boundary \cite[Theorem 4.9]{bb16}. The adjoint boundary condition is $P_{>0}\phi=0$, therefore $\daps$ is self adjoint exactly when $\ker A=0$ \cite[Example 5.12]{bb16}. Further, when $n$ is even, $S_W = S_W^+ \oplus S_W^-$. The operator $\dw$ exchanges the chiralities, whereas $A$ and its spectral projections preserve them. We denote by $D^{\pm}_{\mathrm{APS}}$ the restriction of $\daps$ to $S_W^\pm$ and define the APS index by $\indaps(W,g)= \mathrm{Ind} \dapsp$. Then, we have the following proposition. 

\begin{prop}\label{prop:vanishindex}
    Let $n$ be even and $\ker A = \{0\}$. If there is no nonzero harmonic spinor satisfying the APS boundary condition, then $\indaps(W,g) = 0$.
\end{prop}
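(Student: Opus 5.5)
The plan is to write $\indaps(W,g) = \dim\ker \dapsp - \dim\operatorname{coker}\dapsp$ and show that both terms vanish. The kernel is the easy part. An element of $\ker \dapsp$ is a harmonic spinor $\psi \in H^1(W,S_W^+)$ whose boundary trace lies in $\haps$, so it satisfies the APS boundary condition. By hypothesis such a $\psi$ is zero, and therefore $\ker\dapsp = 0$. The same argument applied to $S_W^-$ shows $\ker \dapsm = 0$.

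The cokernel is identified through the adjoint boundary problem. By \cite[Example 5.12]{bb16}, the adjoint of $\daps$ is $\dw$ with the boundary condition $P_{>0}\phi = 0$. Since $\ker A = \{0\}$, the projection $P_{>0}$ agrees with $P_{\geq 0}$, and so $\daps$ is self-adjoint. Both $\dw$ and $A$ respect the grading: $\dw$ exchanges $S_W^\pm$, while $A$ and its spectral projections preserve them. Hence this self-adjointness splits as $(\dapsp)^* = \dapsm$, where $\dapsm$ is the operator from $S_W^-$ to $S_W^+$ with the APS condition imposed on the $S_W^-$ component. Then $\operatorname{coker}\dapsp \cong \ker (\dapsp)^* = \ker\dapsm = 0$. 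Because $\dapsp$ is Fredholm by \cite{bb16}, its range is closed, so this identification of the cokernel with the adjoint kernel is legitimate. Combining the two steps gives $\indaps(W,g) = 0 - 0 = 0$.

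The one step that needs care is the graded splitting of the adjoint, namely that the adjoint of the restriction to $S_W^+$ is exactly the restriction to $S_W^-$ with the correct boundary condition. To check it, I would use Green's formula
\[
\int_W \inner{\dw\psi}{\eta} - \inner{\psi}{\dw\eta} = -\int_\Sigma \inner{c(\n)\psi}{\eta}.
\]
Take $\psi$ a positive spinor and $\eta$ a negative spinor. Clifford multiplication $c(\n)$ maps $S_W^+|_\Sigma$ to $S_W^-|_\Sigma$, and by \cite[Proposition 2.3]{bar96} it anticommutes with $A$, so it carries $\haps\cap S^+$ onto the positive spectral subspace of $A$ in $S^-$. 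The boundary term therefore vanishes for all admissible $\psi$ exactly when $P_{>0}\eta = 0$. By $\ker A = 0$, this is the APS condition on $\eta$.

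The hypothesis $\ker A = 0$ is used precisely here. Without it, $\ker A$ would enter the adjoint boundary condition and the adjoint would no longer be the APS operator on $S_W^-$.
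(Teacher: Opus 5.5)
Your proposal is correct and follows essentially the same argument as the paper: $\ker A=0$ makes $\daps$ self-adjoint by \cite[Example 5.12]{bb16}, so $(\dapsp)^*=\dapsm$, and the index $\dim\ker\dapsp-\dim\ker\dapsm$ vanishes by the hypothesis. Your Green's formula check, which uses the anticommutation of $c(\n)$ with $A$, simply spells out the graded adjoint identification that the paper takes for granted.
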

\begin{proof}
    Since $\ker A = 0$, and $\daps$ is self adjoint, so we have $(\dapsp)^*=\dapsm$. Therefore  $\mathrm{Ind} \dapsp =  \dim \ker \dapsp - \dim \ker \dapsm = 0$.
\end{proof}

We shall require two further properties. The first is that $\indaps$ is constant along a continuous path of metrics with invertible boundary Dirac operators. Since $A$ is the Dirac operator of $\Sigma$, its kernel depends only on the boundary metric; therefore, if $\ker A=0$, then $\indaps(W,g)$ depends on $g$ only via $g_{\Sigma}$ and hence can be computed for a $g$ that is a product in a collar of $\Sigma$.  
The second is the gluing formula. If a closed hypersurface $\Sigma$ separates an even-dimensional spin manifold $X$ into $W_1$ and $W_2$, i.e, $X = W_1 \cup_{\Sigma} W_2$, and if the boundary Dirac operators are invertible, then the decomposition theorem \cite[Theorem 1.19]{bb12} gives
\begin{equation}\label{eq:apsgluing}
    \indaps(X,g) = \indaps(W_1,g)+\indaps(W_2,g).
\end{equation}
Note that the two outward normals give boundary operators $A$ and $-A$, and therefore the spectral boundary conditions are complementary. When $X$ is closed, the left-hand side is just the usual index $\ind(X)$ and is equal to $\widehat A(X)$ by the index theorem. In dimension four, this is just a constant times the signature, i.e., $-\sigma(X)/8$.

\subsection{Mean convex boundary} The following corollary is an immediate consequence of equation \ref{eq:bdrylich} and motivates the next subsection.

\begin{corollary} \label{cor:meanconvex}
If $\sw >0$ and $\mc \geq 0$, then there is no nonzero harmonic spinor satisfying the APS boundary condition. Furthermore, when $n$ is even and $\ker A=\{0\}$, then we have  $\indaps (W,g)=0$.
\end{corollary}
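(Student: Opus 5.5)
Suppose $\psi$ is a harmonic spinor on $W$ satisfying the APS boundary condition, and write $\phi$ for its boundary trace. I will show that the two sides of the boundary identity \eqref{eq:bdrylich} have opposite signs unless $\psi=0$.

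First consider the right-hand side. The condition $P_{\geq 0}\phi=0$ says that $\phi$ lies in $\haps$. Expanding $\phi$ in eigenspinors of $A$, every eigenvalue that appears is negative, so
\[
\int_\Sigma \inner{A\phi}{\phi}\leq 0 .
\]
Since $\mc\geq 0$, we also have $-\int_\Sigma \frac{\mc}{2}|\phi|^2\leq 0$. Adding these, the right-hand side of \eqref{eq:bdrylich} is $\leq 0$.

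Now consider the left-hand side. Because $\sw>0$, it is $\geq \frac14\int_W \sw|\psi|^2\geq 0$. Combining the two bounds forces $\int_W \sw|\psi|^2=0$, and hence $\psi\equiv 0$ on $W$. So there is no nonzero harmonic spinor satisfying the APS boundary condition.

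For the second claim, assume $n$ is even and $\ker A=\{0\}$. By the first part there is no nonzero APS harmonic spinor, so Proposition \ref{prop:vanishindex} gives $\indaps(W,g)=0$.

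Two technical points need care.

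\begin{itemize}
\item The argument uses $\phi$ and the pairing $\int_\Sigma\inner{A\phi}{\phi}$. Kernel elements are smooth up to the boundary by \cite[Theorem 4.9]{bb16}, so $\phi$ is smooth, the eigen-expansion of $\phi$ converges, and this pairing makes sense.
\item The identity \eqref{eq:bdrylich} was derived using the outward normal convention. This is the same convention under which the APS condition selects the negative spectrum, as the half-cylinder motivation shows, so the sign in the first step is the correct one.
\end{itemize}

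I expect no serious obstacle beyond this sign bookkeeping.
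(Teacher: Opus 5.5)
Your proof is correct and is essentially the paper's argument. The APS condition together with $\mc\geq 0$ makes the right-hand side of \eqref{eq:bdrylich} non-positive, while $\sw>0$ makes the left-hand side positive unless $\psi=0$; Proposition \ref{prop:vanishindex} then gives $\indaps(W,g)=0$. Your extra checks on boundary regularity and on the outward-normal sign convention are accurate and only make explicit what the paper leaves implicit.
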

\begin{proof}
  Suppose $\psi$ is a harmonic spinor satisfying the APS boundary condition, i.e., $\phi \in \haps$, then $\int_{\Sigma} \inner{A\phi}{\phi} \leq 0$. Combined with $\mc \geq 0$, this gives that the right-hand side of  \eqref{eq:bdrylich} is non-positive. But, since $\sw>0$, the left-hand side is positive unless $\psi =0$. Hence $\psi=-$ and Proposition \ref{prop:vanishindex} gives $\indaps(W,g)=0$.
\end{proof}

\subsection{Index vanishing from modified stability operator}\label{subsec:vanish}
We now replace the mean convexity condition with a weaker spectral condition $\lambda_1(\mathcal{L}_{\Sigma})>0$. The first step is a lower bound on the first eigenvalue of $A^2-\mc^2/4$.

\begin{lemma}\label{lem:agap}
$\lambda_1(A^2 - \mc^2/4) \geq \frac{1}{2} \lambda_1(\mathcal{L}_{\Sigma})$. In particular, if $\lambda_1(\mathcal{L}_{\Sigma})>0$, then $\ker A = 0$. 
\end{lemma}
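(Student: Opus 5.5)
The plan is to compare the quadratic form of $A^2-\mc^2/4$ with that of $\mathcal{L}_\Sigma$ by means of the Lichnerowicz formula for $A$ and Kato's inequality. For even $n$ we have $A^2 = D_\Sigma^2 \oplus D_\Sigma^2$, so in every case $A^2=(\cons)^*\cons+\frac14\scs$ holds on $S_W|_\Sigma$, as recalled above. Take a smooth spinor $\phi$ on $\Sigma$. Integrating by parts on the closed manifold $\Sigma$ gives
\begin{equation*}
\int_\Sigma \inner{(A^2-\mc^2/4)\phi}{\phi} = \int_\Sigma |\cons\phi|^2 + \frac14(\scs-\mc^2)|\phi|^2 .
\end{equation*}

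Next set $f=|\phi|$. Kato's inequality $|\nabla |\phi|| \le |\cons\phi|$ holds almost everywhere, and $f$ is Lipschitz, hence lies in $H^1(\Sigma)$. This yields
\begin{equation*}
\int_\Sigma |\cons\phi|^2 + \frac14(\scs-\mc^2)|\phi|^2 \;\ge\; \int_\Sigma |\nabla f|^2 + \frac14(\scs-\mc^2) f^2 \;\ge\; \frac12\int_\Sigma |\nabla f|^2 + \frac12(\scs-\mc^2)f^2 .
\end{equation*}
The last step simply discards the nonnegative term $\frac12\int|\nabla f|^2$. The final expression equals $\frac12\int_\Sigma f\,\mathcal{L}_\Sigma f$ in weak form. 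By the variational characterization of $\lambda_1(\mathcal{L}_\Sigma)$ over $H^1$ functions, it is therefore at least $\frac12\lambda_1(\mathcal{L}_\Sigma)\int_\Sigma f^2 = \frac12\lambda_1(\mathcal{L}_\Sigma)\int_\Sigma|\phi|^2$. Taking the infimum of the Rayleigh quotient over $\phi$, using that $A^2-\mc^2/4$ is self-adjoint and elliptic with discrete spectrum, gives $\lambda_1(A^2-\mc^2/4)\ge \frac12\lambda_1(\mathcal{L}_\Sigma)$.

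For the kernel statement, suppose $A\phi=0$. The identity above then gives
\begin{equation*}
-\frac14\int_\Sigma \mc^2|\phi|^2 \ge \frac12\lambda_1(\mathcal{L}_\Sigma)\int_\Sigma|\phi|^2 .
\end{equation*}
If $\lambda_1(\mathcal{L}_\Sigma)>0$, the left side is nonpositive while the right side is positive unless $\phi=0$. Hence $\ker A=0$.

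The only point requiring care is the use of Kato's inequality at zeros of $\phi$, where $|\phi|$ need not be smooth. I would handle this in the standard way: work with $f_\varepsilon=\sqrt{|\phi|^2+\varepsilon^2}$, which is smooth and satisfies $|\nabla f_\varepsilon|\le|\cons\phi|$. Then let $\varepsilon\to0$; since $f_\varepsilon\to|\phi|$ in $H^1$-weak and uniformly, the estimates pass to the limit.
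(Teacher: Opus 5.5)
Your proof is correct and follows essentially the same route as the paper: integrate the Lichnerowicz formula for $A$, use Kato's inequality, test the quadratic form of $\mathcal{L}_\Sigma$ against $|\phi|$, and discard a nonnegative gradient term. The only differences are that you apply Kato before invoking the $\mathcal{L}_\Sigma$ bound (the paper does so after), and that your regularization $\sqrt{|\phi|^2+\varepsilon^2}$ spells out the Lipschitz/Kato step that the paper states without proof.
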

\begin{proof}
    By integrating the Lichnerowicz formula for $A$, we have for any test spinor $\phi$, $$\int_{\Sigma}|A\phi|^2  = \int_{\Sigma}|\cons \phi|^2 + \frac{1}{4}\int_{\Sigma} \scs|\phi|^2.$$
    We write $\lambda_1 \coloneqq \lambda_1({\mathcal{L}_{\Sigma}})$. Then $\lambda_1>0$ means that for all test function $f$, we have $$\int_{\Sigma}|\nabla f|^2 + \frac{1}{2}(\scs-\mc^2)f^2 \geq \lambda_1\int_\Sigma f^2.$$
    Now, note that, even though $\phi$ is a section, $|\phi|$ is a well-defined Lipschitz function on $\Sigma$. Further, $|\nabla|\phi|| \leq |\cons \phi|$ by Kato inequality. Therefore, we test against $|\phi|$ and combine the above identity and inequality as follows:
    \begin{align*}
        4 \int_{\Sigma}|A\phi|^2 &= 4  \int_{\Sigma} |\cons \phi|^2 +  \int_{\Sigma} \scs |\phi|^2 \\
                &\geq 4  \int_{\Sigma} |\cons \phi|^2 - 2\int_{\Sigma}|\nabla|\phi||^2 + \int_{\Sigma} (\mc^2 + 2\lambda_1)|\phi|^2 \\
                &\geq 2  \int_{\Sigma} |\cons \phi|^2 + \int_{\Sigma} (\mc^2 + 2\lambda_1)|\phi|^2 \\
                &\geq  \int_{\Sigma} (\mc^2 + 2\lambda_1)|\phi|^2.
    \end{align*}
If $\lambda_1 > 0$, then $A\phi = 0$ forces $\phi = 0$. Further, since this is true for every test spinor $\phi$, dividing by $4$ and rearranging gives the desired conclusion.
\end{proof}

We can now come to the second step, that is, to show that $\lambda_1(A^2-\mc^2/4)>0$ forces a sign on the boundary term in \eqref{eq:bdrylich} for all spinors in $\haps$. 

\begin{prop} \label{prop:rhssign}
    Suppose $\lambda_1(\mathcal{L}_{\Sigma})>0$. Then for every $\phi \in \haps$, we have \begin{equation*}
        \int_{\Sigma} \langle (A-\mc/2)\phi,\phi \rangle \leq 0.
    \end{equation*}
\end{prop}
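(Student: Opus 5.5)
The plan is to exploit the fact that on $\haps$ the operator $A$ is negative, so that $-A$ coincides with $|A|$ there. The inequality then reduces to the operator inequality $|A| \geq |\mc|/2$, which I would obtain from Lemma \ref{lem:agap} by taking square roots. Concretely, for $\phi \in \haps$ we have $\int_\Sigma \inner{A\phi}{\phi} = -\int_\Sigma \inner{|A|\phi}{\phi}$, since $A$ and $|A| = (A^2)^{1/2}$ are simultaneously diagonalised by the eigenspinors of $A$. The claim is therefore equivalent to
\begin{equation*}
-\int_\Sigma \frac{\mc}{2}|\phi|^2 \leq \int_\Sigma \inner{|A|\phi}{\phi}.
\end{equation*}
Since $-\mc/2 \leq |\mc|/2$ pointwise, it suffices to show that $\int_\Sigma \inner{|A|\phi}{\phi} \geq \int_\Sigma \frac{|\mc|}{2}|\phi|^2$ for all $\phi$ in the form domain $H^{1/2}$ of $|A|$.

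The key steps are as follows.

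First, Lemma \ref{lem:agap} together with $\lambda_1(\mathcal{L}_\Sigma)>0$ gives the quadratic-form inequality
\begin{equation*}
\int_\Sigma |A\phi|^2 \geq \int_\Sigma \frac{\mc^2}{4}|\phi|^2 \quad \text{for all } \phi \in H^1.
\end{equation*}
Writing $M$ for the bounded self-adjoint multiplication operator by the nonnegative smooth function $|\mc|/2$ on $L^2(\Sigma, S_W|_\Sigma)$, this says $A^2 \geq M^2 \geq 0$ as nonnegative self-adjoint operators, in the sense of quadratic forms.

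Second, I would invoke the operator monotonicity of the square root (the L\"owner--Heinz inequality): if $T \geq S \geq 0$ in the form sense, then $T^{1/2} \geq S^{1/2}$. Applied to $T = A^2$ and $S = M^2$, this yields $|A| \geq M$. The form domain of $|A|$ is $H^{1/2}$, which is exactly where boundary traces of $H^1$ spinors live, so the inequality applies to every $\phi \in \haps \cap H^{1/2}$.

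Third, I would combine the two observations. For $\phi \in \haps$,
\begin{equation*}
\int_\Sigma \inner{(A-\mc/2)\phi}{\phi} = -\int_\Sigma \inner{|A|\phi}{\phi} - \int_\Sigma \frac{\mc}{2}|\phi|^2 \leq -\int_\Sigma \frac{|\mc|+\mc}{2}|\phi|^2 \leq 0.
\end{equation*}

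I expect the main obstacle to be the square-root step, since $A^2$ is unbounded and the inequality must be handled at the level of forms. If one prefers to avoid quoting L\"owner--Heinz for unbounded operators, I would use the bounded reformulation: $A^2 \geq M^2$ with $A$ invertible (by Lemma \ref{lem:agap}) gives $\|M|A|^{-1}\| \leq 1$. Hence the bounded operator $X = |A|^{-1/2} M |A|^{-1/2}$ satisfies $\|X\| \leq \||A|^{-1/2}M|A|^{-1/2}\| \leq \|M|A|^{-1}\| \leq 1$, by the standard spectral-radius/similarity argument ($X$ is similar to $M|A|^{-1}$). It follows that $X \leq 1$, that is $M \leq |A|$.
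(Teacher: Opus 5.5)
Your argument is correct, but it takes a different route from the paper's. The paper never forms $|A|$. It restricts to a finite-dimensional $A$-invariant subspace $E\subset\haps$ spanned by eigenspinors and assumes the form $\int_\Sigma\langle(A-\mc/2)\phi,\phi\rangle$ is not negative definite on $E$. It then takes a top eigenvector $\phi\in E$ of the compressed operator, with eigenvalue $\lambda\geq 0$. Next it uses $\|(A-\lambda)\phi\|^2\geq\|A\phi\|^2$, which holds because $\langle A\phi,\phi\rangle\leq 0$ on $\haps$ and $\lambda\geq 0$, to get $\int_\Sigma|A\phi|^2\leq\frac14\int_\Sigma\mc^2|\phi|^2$; this contradicts Lemma \ref{lem:agap}. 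General $\phi$ is handled by eigenspinor truncation. This is in effect a hand-made finite-dimensional proof of the square-root monotonicity you quote. Strictly, the eigen-equation there reads $(A-\lambda)\phi=\frac12 P_E(\mc\phi)$, which still gives the needed inequality since $P_E$ is a contraction.

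You instead package everything into the single operator inequality $|A|\geq|\mc|/2$ on $H^{1/2}$, via L\"owner--Heinz or your bounded reformulation. For the latter, the cleanest justification of $\|X\|\leq\|M|A|^{-1}\|$ is this: $X=ST$ and $M|A|^{-1}=TS$ with bounded $S=|A|^{-1/2}$ and $T=M|A|^{-1/2}$, so they share their nonzero spectrum, and $\|X\|=r(X)$ because $X$ is self-adjoint.

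What your route buys is threefold. There is no approximation step. The estimate holds on the natural trace space $H^{1/2}$, whereas the paper's truncation tacitly needs $A\phi\in L^2$. You also get a quantitative bound $\int_\Sigma\langle(A-\mc/2)\phi,\phi\rangle\leq-\int_\Sigma\max(\mc,0)|\phi|^2$; keeping the $\lambda_1/2$ term from Lemma \ref{lem:agap} even gives strict negativity, matching what the paper's contradiction yields on each $E$. The paper's version is, in exchange, fully elementary and self-contained.

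One minor slip: $|\mc|/2$ is only Lipschitz, not smooth, but boundedness is all you use.
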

\begin{proof}
    We first note that it is enough to prove this for finite-dimensional subspaces $E$ of $\haps$. Indeed, suppose $\phi \in \haps$, then consider the partial sums $\phi_N$ of the expansion of $\phi$ in terms of eigenspinors of $A$. Then, $\phi_N \to \phi$ and $A\phi_N \to A\phi$ in $L^2$ to $\phi$ and $A\phi$. So both terms on the left-hand side of the inequality converge. 
    Henceforth, we assume that $E \subset \haps$ is finite-dimensional and consider the integral above for $\phi \in E$. If it is less than zero for all such $E$ then we are done. Therefore, assume otherwise. In that case since $E$ is finite dimensional, $A-\mc/2$ has an eigenspinor in $E$ with non-negative eigenvalue, i.e., there is a $\phi \in E$ and a $\lambda \geq 0$ such that, $$(A-\lambda)\phi = \frac{1}{2}\mc\phi.$$
    Taking norms on both sides, we get \begin{align*}
        \frac{1}{4}\int_{\Sigma} \mc^2 |\phi|^2 &= \int_{\Sigma} |(A-\lambda)\phi|^2 \\
    &= \int_{\Sigma} |A\phi|^2 + \lambda^2|\phi|^2 - 2\lambda \langle A\phi,\phi\rangle \\
    &\geq \int_{\Sigma} |A\phi|^2.
    \end{align*}
    Note that the last term is non-negative because $\phi \in \haps$ and $\lambda \geq 0$. But, this is a contradiction because by Lemma \ref{lem:agap}, as $\lambda_1 >0$, we have that $$\int_{\Sigma} |A\phi|^2 > \frac{1}{4}\int_{\Sigma} \mc^2 |\phi|^2. $$
\end{proof}

We now prove the main result of this section. 

\begin{theorem}\label{thm:indexwvanish}
    Let $(W,\Sigma,g)$ be a compact spin Riemannian manifold with boundary $\Sigma$. If $\sw >0$ and $\lambda_1(\mathcal{L}_\Sigma)>0$, then there is no nonzero harmonic spinor on $W$ satisfying the APS boundary condition. In particular, if $\dim W$ is even, then $\indaps(W,g) = 0$.
\end{theorem}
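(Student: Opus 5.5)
The plan is to combine the boundary identity \eqref{eq:bdrylich} with Proposition \ref{prop:rhssign}, exactly as in the proof of Corollary \ref{cor:meanconvex}, with the mean convexity hypothesis replaced by the spectral one.

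First, let $\psi$ be a harmonic spinor on $W$ satisfying the APS boundary condition, so its trace $\phi$ lies in $\haps$. By \cite[Theorem 4.9]{bb16}, $\psi$ is smooth up to the boundary. Hence the integration by parts behind \eqref{eq:bdrylich} is legitimate, and $\phi$ is smooth, so $A\phi\in L^2$. Since $\lambda_1(\mathcal{L}_\Sigma)>0$, Proposition \ref{prop:rhssign} gives
\begin{equation*}
\int_\Sigma \inner{(A-\mc/2)\phi}{\phi}\leq 0 .
\end{equation*}
Therefore the right-hand side of \eqref{eq:bdrylich} is non-positive.

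On the other hand, the left-hand side satisfies
\begin{equation*}
\int_W |\conw\psi|^2+\tfrac14\sw|\psi|^2 \;\geq\; \tfrac14\min_W \sw \int_W|\psi|^2 .
\end{equation*}
Here $\min_W\sw>0$ because $W$ is compact and $\sw>0$. Comparing the two sides gives $\int_W|\psi|^2\le 0$, so $\psi=0$. This proves the first claim.

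For the index statement with $\dim W$ even, Lemma \ref{lem:agap} shows that $\lambda_1(\mathcal{L}_\Sigma)>0$ forces $\ker A=\{0\}$. Then Proposition \ref{prop:vanishindex}, together with the absence of nonzero APS harmonic spinors just established, gives $\indaps(W,g)=0$.

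The only point needing care is the regularity of the boundary trace. Proposition \ref{prop:rhssign} is stated for $\phi\in\haps$, and its approximation argument needs $A\phi\in L^2$, i.e. $\phi\in H^1$ rather than merely $H^{1/2}$. This is supplied by the smoothness up to the boundary of kernel elements of $\daps$. All the substantive work has already been done in Lemma \ref{lem:agap} and Proposition \ref{prop:rhssign}.
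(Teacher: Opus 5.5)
Your proposal is correct and follows the paper's own proof essentially step for step. The boundary identity \eqref{eq:bdrylich} together with Proposition \ref{prop:rhssign} rules out nonzero APS harmonic spinors, after which Lemma \ref{lem:agap} gives $\ker A=\{0\}$ and Proposition \ref{prop:vanishindex} yields $\indaps(W,g)=0$; your remarks on smoothness of the boundary trace and on $\min_W \sw>0$ only make explicit what the paper leaves implicit.
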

\begin{proof}
    Suppose $\psi$ is a harmonic spinor satisfying the APS condition, i.e., its boundary trace $\phi \in \haps$. Therefore, it satisfies \eqref{eq:bdrylich}. Since, $\sw >0$, the left hand side is positive, unless $\psi=0$, and since $\lambda_1(\mathcal{L}_\Sigma)>0$, by Proposition \ref{prop:rhssign}, the right hand side is non-positive. Therefore, $\psi = 0$. Further, $\ker A=0$ by Lemma \ref{lem:agap}. When $\dim W$ is even, Proposition \ref{prop:vanishindex} implies $\indaps(W,g) = 0$.
\end{proof}

\section{Spin fillings with vanishing index}\label{sec:filling}
We now prove that the index of spin fillings contained in a four-ball vanishes, without any curvature condition in the interior. The following is the main result of this section. 

\begin{theorem}\label{thm:fillingindex}
    Let $W$ be a compact domain in a four-ball $B$ with a smooth boundary $(\Sigma,h)$ with $\sh >0$, and equip $W$ with the spin structure induced from $B$. Then, for every Riemannian metric $g$ on $W$ extending $h$, we have $\ker A=0$ and $\indaps (W,g)=0$. \end{theorem}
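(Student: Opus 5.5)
Since $\sh>0$, the Lichnerowicz formula $A^2=(\cons)^*\cons+\frac14\sh$ shows at once that $D_\Sigma$ has trivial kernel. So $\ker A=0$ for any $g$ extending $h$, and by the deformation invariance discussed after Proposition \ref{prop:vanishindex}, $\indaps(W,g)$ depends only on $h$. We may therefore assume $g$ is a product near $\Sigma$. The plan is to compute this index topologically by gluing $W$ to closed manifolds and applying \eqref{eq:apsgluing}, and then to use the constraint that $\Sigma$ embeds in $B\subset S^4$ together with Seiberg--Witten input (Proposition \ref{prop_Froyshov-Baraglia}) to force the value to be $0$.

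First I would double $W$ along $\Sigma$ using the product collar. The double $DW=W\cup_\Sigma \overline W$ is a closed spin $4$-manifold with signature $0$, since $\sigma(\overline W)=-\sigma(W)$ and Novikov additivity applies. Hence $\widehat A(DW)=0$. Reflection identifies $\indaps(\overline W)$ with the index of $W$ under the complementary boundary condition $P_{\le 0}$, which equals $-\ind D^-_{\mathrm{APS}}$. By the APS formula this is related to $\indaps(W)$ through the $\eta$-invariant terms. More concretely, the APS theorem gives
\[
\indaps(W,g)=\int_W \widehat A(g)-\tfrac12\eta(D_\Sigma,h)\quad(\text{up to orientation conventions, with } h(D_\Sigma)=0),
\]
and $\int_W\widehat A(g)$ equals $-\frac1{24}\int_W p_1$, which for product metrics is $-\frac18(\sigma(W)-\text{signature-}\eta\text{ term})$. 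So the index is $-\sigma(W)/8$ plus a spectral invariant of $(\Sigma,h)$ alone. Since $W\subset B\subset S^4$, $H_2(W)$ carries trivial intersection form: any two classes can be pushed off each other in $S^4$, so $\sigma(W)=0$. Thus $\indaps(W,g)=c(\Sigma,h)$, a quantity depending only on the psc boundary.

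Second, the complement $C=\overline{S^4\setminus W}$, also spin with $\sigma(C)=0$, gives $0=\widehat A(S^4)=\indaps(W)+\indaps(C)$. Hence $c(\Sigma,h)=-c(-\Sigma,h)$, which is consistent but by itself not sufficient. To conclude I would use the classification of psc $3$-manifolds: $\Sigma$ is a connected sum of spherical space forms and $S^1\times S^2$'s, and $h$ lies in the psc component determined up to isotopy (Marques). Moreover, $c(\Sigma,h)$ is then a spin-rational homology-cobordism-type invariant. Indeed, for psc $h$ it equals (a multiple of) the Frøyshov-type correction $\delta$ or $\kappa$-invariant coming from monopole Floer theory, since at a psc metric the reducible is the only solution and the index shift is exactly $\indaps$ of a filling. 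Proposition \ref{prop_Froyshov-Baraglia} should assert exactly that for any spin filling $C$ of psc $\Sigma$ with negative- or positive-definite, or here zero, intersection form, this invariant is bounded by $\indaps$ in both directions. Applying it to both $W$ and $C$, which both have vanishing intersection form since they sit in $S^4$, gives $c(\Sigma,h)\ge0$ and $-c(\Sigma,h)\ge 0$, hence $c=0$.

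The main obstacle is this last step: matching $\indaps$ at a psc boundary metric with the Seiberg--Witten invariant, and checking that the Frøyshov/Baraglia inequality applies to fillings with non-definite but zero-signature forms. If that fails, I would try instead to realize $\Sigma$'s psc filling explicitly for each summand, such as punctured $S^2\times D^2$ boundary sums, to compute $c=0$ directly.
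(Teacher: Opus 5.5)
Your opening steps are fine and amount to the paper's boundary invariant. The quantity $\nu(\Sigma)=\indaps(W,g)+\sigma(W)/8$ depends only on the spin psc boundary: you get this from the APS and signature theorems via $\eta$-invariants, while the paper gets it by gluing two fillings and connecting psc metrics with Bamler--Kleiner. Also $\sigma(W)=0$ because $W\subset S^4$. As you note, the double and the complement only give the tautology $\nu(\Sigma)=-\nu(-\Sigma)$.

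The gap is the last step, because Proposition \ref{prop_Froyshov-Baraglia} is not a two-sided bound for arbitrary fillings.
\begin{itemize}
\item It applies only to a spin filling $Z$ with $b_1(Z)=b_2(Z)=0$ of a psc \emph{rational homology sphere} $Y$. Its proof runs through the Fr{\o}yshov invariant $h(Y)$, which is defined using the unique reducible.
\item Your $\Sigma$ may contain $S^2\times S^1$ summands. Then the reducibles form a torus and this $h$ is not defined. Moreover $W$ and its complement generally have $b_1,b_2\neq 0$.
\item Nothing you cite gives inequalities for fillings with degenerate (zero) intersection form. Your claim that $c(\Sigma,h)$ is a rational homology cobordism invariant is likewise unproved.
\end{itemize}
If $\Sigma$ were a rational homology sphere, Mayer--Vietoris would make $W$ a rational homology ball and the proposition would apply directly. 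So the whole difficulty is the $S^2\times S^1$ summands.

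This step also cannot be made soft, because $\nu$ of a psc space form can be nonzero. For the Poincar\'e sphere it is odd: $\indaps$ is even by the quaternionic structure, while the $E_8$ plumbing has $\sigma/8=\pm1$. So the embedding in $S^4$ must enter in a precise way. The same point sinks your fallback of computing explicit fillings summand by summand, since one does not know which sums of space forms embed in $S^4$.

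The missing idea is a reduction to the rational-homology-sphere case that keeps $\nu$ fixed.
\begin{enumerate}
\item $\nu$ is unchanged across a spin cobordism $V$ with $\sigma(V)=0$ that carries a psc metric which is a product near both ends. Indeed $\indaps(V)=0$ by Corollary \ref{cor:meanconvex}, since the collars make the boundary totally geodesic; this is Corollary \ref{cor:nunotchange}.
\item Write $\Sigma=Y\#_k(S^2\times S^1)$ with $Y$ a rational homology sphere. Attaching $3$-handles along the spheres $S^2\times\{pt\}$ gives such a $V$ from $\Sigma$ to $Y$. Read backwards it consists of $1$-handles, i.e.\ codimension-three surgeries, so Gromov--Lawson supplies the psc metric. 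Hence $\nu(\Sigma)=\nu(Y)$.
\item Mayer--Vietoris for $S^4=W\cup_\Sigma W^c$ shows that $H_2(\Sigma;\QQ)\to H_2(W;\QQ)$ is onto. So $H_2(W;\QQ)$ is spanned by those spheres, and $W\cup_\Sigma V$ has $b_2=0$.
\item Spin-compatible surgeries on interior circles then kill $b_1$ without changing $b_2$. This is an Euler characteristic count using $b_3=b_1$. The result is a spin rational homology ball $Z$ with $\partial Z=Y$.
\item Now Proposition \ref{prop_Froyshov-Baraglia} applies: $Z$ minus a ball is a rational homology cobordism from $S^3$ to $Y$, so $h(Y)=0$, and therefore $\indaps(Z)=0$.
\end{enumerate}
Hence $\nu(Y)=\sigma(Z)/8=0$, and $\indaps(W,g)=\nu(\Sigma)-\sigma(W)/8=0$.
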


The proof is at the end of this section. The rest of the section is devoted to developing tools for proving this. We first make the following easy observation. 

\begin{lemma}\label{lem:spinindexmetric}
    Let $W^4$ be a compact spin manifold with a smooth boundary $(\Sigma,h)$ with $\sh >0$, and let $g$ be a metric on $W$ extending $h$. Then, the condition that $\ker A=0$ and the value of $\indaps (W,g)$ are independent of both $g$ and the psc metric $h$.
\end{lemma}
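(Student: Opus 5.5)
Since $\sh>0$, the Lichnerowicz formula $A^2=(\cons)^*\cons+\frac14\scs$ on the closed three-manifold $\Sigma$ gives $\int_\Sigma|A\phi|^2\geq \frac14\int_\Sigma \sh|\phi|^2>0$ for every nonzero $\phi$. Hence $\ker A=0$ for every psc metric $h$, whatever $g$ is. The boundary Dirac operator only sees $g_\Sigma=h$ (and the spin structure on $\Sigma$ restricted from $W$), so the first assertion is independent of $g$ and of the choice of psc metric $h$.

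For the index, I would argue in two deformation steps, both resting on the homotopy invariance of $\indaps$ along continuous families of metrics whose boundary Dirac operators stay invertible, as recalled after Proposition \ref{prop:vanishindex}.

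First, fix $h$ and let $g_0,g_1$ be two metrics on $W$ extending $h$. The linear path $g_t=(1-t)g_0+tg_1$ consists of Riemannian metrics, and each restricts to $h$ on $\Sigma$. So the boundary operator along the path is the fixed invertible operator $A_h$, and $\indaps(W,g_0)=\indaps(W,g_1)$. This is independence from $g$.

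Second, let $h_0,h_1$ be two psc metrics on $\Sigma$. The key input I would use is that the space of psc metrics on a closed three-manifold is path connected; this is Marques' theorem, and it relies on the classification of psc three-manifolds mentioned in the introduction. Choose a smooth path $h_t$ of psc metrics. Then extend it to a continuous path $g_t$ on $W$ with $g_t|_\Sigma=h_t$. To do this, fix a collar $\Sigma\times[0,1)$, put $dr^2+h_t$ near the boundary, and patch it to a fixed interior metric with a cutoff; convex combinations keep positivity. Each $h_t$ is psc, so by the first paragraph each $A_{h_t}$ is invertible, and the index is constant along $t$. Combined with the first step, this gives independence from $h$.

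The main obstacle is the connectedness of the space of psc metrics on $\Sigma$. Without it, two psc metrics in different components could differ by a spectral-flow contribution, and the index could change. I would simply cite Marques for it. A minor technical point is that the homotopy invariance of $\indaps$ needs the boundary data to vary continuously. This holds because the family $A_{h_t}$ is a continuous family of invertible self-adjoint elliptic operators, so the spectral projection $P_{\geq0}$ varies continuously and the APS problem stays Fredholm throughout, by \cite{bb16}.
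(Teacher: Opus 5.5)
Your argument is correct and is essentially the paper's: Lichnerowicz with $\sh>0$ gives $\ker A=0$, and a path of psc metrics on $\Sigma$, placed on $W$ through a collar (the paper's ``attaching the trace of the path''), keeps the boundary operator invertible, so $\indaps$ stays constant; your linear-path step for independence of $g$ at fixed $h$ spells out what the paper leaves implicit. One correction to your citation: path-connectedness of the space of psc metrics itself is due to Bamler--Kleiner \cite{bk19}, which is what the paper cites, whereas Marques proved it only for the moduli space modulo $\mathrm{Diff}(\Sigma)$, which by itself gives a path from $h_0$ to $\varphi^*h_1$ and would leave you to show the index is unchanged by a diffeomorphism $\varphi$ that may move the spin structure on $\Sigma$ and need not extend over $W$.
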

\begin{proof}
    The Lichnerowicz formula and $\sh >0$ give $\ker A = 0$. By Bamler-Kleiner \cite{bk19}, the space of psc metrics on any closed three-manifold is path connected. Attaching the trace of such a path to $W$ proves the result, because the boundary Dirac operators along the path remain invertible. 
\end{proof}

Lemma \ref{lem:spinindexmetric} allows us to vary metrics for a fixed filling. However, we also need to keep track of how the index changes with the filling. To this end, we make the following definition.

\begin{definition}(Boundary Invariant)
Let $(\Sigma,h)$ be a closed spin three-manifold with $\sh >0$. Let $W$ be any spin filling of $\Sigma$ inducing the given spin structure on $\Sigma$, with a metric $g$ extending $h$. We define the boundary invariant of $\Sigma$ as \begin{equation}
    \nu(\Sigma) \coloneqq \indaps(W,g) + \frac{1}{8}\sigma(W).
\end{equation}
\end{definition}

\begin{lemma}\label{lem:bdryinv}
    The boundary invariant $\nu(\Sigma)$ is independent of the psc metric $h$, as well as of the filling $(W,g)$. 
\end{lemma}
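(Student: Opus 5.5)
Independence from the psc metric $h$ and from the extending metric $g$ for a fixed filling $W$ is immediate. By Lemma \ref{lem:spinindexmetric}, $\indaps(W,g)$ depends on neither, and $\sigma(W)$ is topological. It therefore remains to compare two spin fillings $W_1, W_2$ of $(\Sigma,h)$, each inducing the given spin structure on $\Sigma$.

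Using Lemma \ref{lem:spinindexmetric}, I will first replace $g_1, g_2$ by metrics that are products $h + dt^2$ in collars of $\Sigma$. I then form the closed spin four-manifold
\[
X = W_1 \cup_\Sigma \overline{W_2}.
\]
Here $\overline{W_2}$ is $W_2$ with reversed orientation, and the collars are glued so that the metric is smooth. The spin structures agree on $\Sigma$, so $X$ is spin. Since $\ker A = 0$ (by $\sh>0$ and the Lichnerowicz formula), the gluing formula \eqref{eq:apsgluing} gives
\[
\ind(X) = \indaps(W_1,g_1) + \indaps(\overline{W_2},g_2).
\]
On $\overline{W_2}$ the outward normal is unchanged, but the orientation reversal swaps the chiralities $S^\pm$. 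So $\indaps(\overline{W_2})$ is the index of the APS problem for $D^-$ on $W_2$.

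The main step is to show $\indaps(\overline{W_2},g_2) = -\indaps(W_2,g_2)$. For the $D^-$ problem on $W_2$, the formal adjoint is $D^+$ with the adjoint boundary condition $P_{>0}\phi=0$. This coincides with the APS condition because $\ker A=0$, as in the proof of Proposition \ref{prop:vanishindex}. Hence $\ind D^-_{\mathrm{APS}} = -\ind D^+_{\mathrm{APS}}$. I also need to check carefully that the boundary operator on the $S^-$ summand, namely $-D_\Sigma$, is consistent with the convention used for $\overline{W_2}$. This sign bookkeeping is where I expect the most care to be needed. A safer route is the APS formula
\[
\indaps = \int \widehat A\text{-form} - \tfrac12\eta(A).
\]
Under orientation reversal the integrand changes sign, and $\eta$ of the boundary operator also changes sign, since $A \mapsto -A$ on the relevant chirality. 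Together these give the antisymmetry directly.

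Finally, Novikov additivity gives $\sigma(X) = \sigma(W_1) - \sigma(W_2)$, and the index theorem gives $\ind(X) = \widehat A(X) = -\sigma(X)/8$. Combining,
\[
\indaps(W_1) - \indaps(W_2) = -\tfrac18\bigl(\sigma(W_1) - \sigma(W_2)\bigr),
\]
which rearranges to
\[
\indaps(W_1) + \tfrac18\sigma(W_1) = \indaps(W_2) + \tfrac18\sigma(W_2).
\]
So $\nu(\Sigma)$ is independent of the filling.
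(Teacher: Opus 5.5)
Your proposal is correct and follows essentially the same route as the paper: you glue $W_1$ to $\overline{W_2}$, apply the gluing formula with $\indaps(\overline{W_2},g_2)=-\indaps(W_2,g_2)$, and conclude using $\widehat A=-\sigma/8$ and Novikov additivity. The differences are minor: you fix $h$ first via Lemma \ref{lem:spinindexmetric} instead of inserting a neck between $h_1$ and $h_2$, and you justify the sign reversal under orientation reversal (chirality swap plus self-adjointness of $\daps$ when $\ker A=0$), which the paper only asserts.
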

\begin{proof}
    Suppose we have two fillings $(W_1,g_1)$ and $(W_2,g_2)$ for psc metrics $h_1$ and $h_2$ respectively. We can add a neck joining $h_1$ and $h_2$; by Lemma \ref{lem:spinindexmetric}, this does not change the index. We then glue the two fillings on two sides of the neck to form a closed spin manifold $V = W_1 \cup_{\Sigma} \overline{W}_2$; here $\overline{W}_2$ denotes $W_2$ with orientation reversed. Since $\indaps(\overline{W}_2,g_2)=-\indaps(W_2,g_2)$, by the index gluing formula and additivity of signature we have $$\indaps(W_1,g_1) - \indaps(W_2,g_2) = \widehat A(V) = -(\sigma(W_1)-\sigma(W_2))/8. $$
    Rearranging gives the desired conclusion. 
\end{proof}
\begin{corollary} \label{cor:nunotchange}
    Let $(V,g_V)$ be a spin cobordism from $(\Sigma_1,h_1)$ to $(\Sigma_2,h_2)$ with $g_V$ a product near both ends. If $\scal_{V} >0$, then \begin{equation}
        \nu(\Sigma_2)-\nu(\Sigma_1)= \sigma(V)/8.
    \end{equation}
\end{corollary}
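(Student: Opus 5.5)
Fix a spin filling $(W_1,g_1)$ of $(\Sigma_1,h_1)$ with $g_1$ a product near the boundary; Lemma~\ref{lem:bdryinv} says any filling computes $\nu(\Sigma_1)$. Glue $V$ to $W_1$ along $\Sigma_1$ to obtain $W_2 \coloneqq W_1\cup_{\Sigma_1} V$, a spin filling of $\Sigma_2$. Because both metrics are products near $\Sigma_1$ (after matching collars), they glue to a smooth metric $g_2$ on $W_2$ extending $h_2$. The spin structures agree on $\Sigma_1$ since $V$ is a spin cobordism, so $W_2$ induces the given spin structure on $\Sigma_2$. Lemma~\ref{lem:bdryinv} then gives
\[
\nu(\Sigma_2)=\indaps(W_2,g_2)+\tfrac18\sigma(W_2).
\]

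The two terms are handled separately. Novikov additivity gives $\sigma(W_2)=\sigma(W_1)+\sigma(V)$. For the index, apply the decomposition theorem \eqref{eq:apsgluing} along $\Sigma_1$; this is legitimate because $h_1$ has positive scalar curvature, so the boundary Dirac operator is invertible. The result is
\[
\indaps(W_2,g_2)=\indaps(W_1,g_1)+\indaps(V,g_V),
\]
where $V$ carries APS conditions at both boundary components, with the orientation conventions as in \eqref{eq:apsgluing}. Subtracting the two expressions for $\nu$ yields
\[
\nu(\Sigma_2)-\nu(\Sigma_1)=\indaps(V,g_V)+\sigma(V)/8,
\]
so it remains to show $\indaps(V,g_V)=0$.

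For this, use $\scal_V>0$ together with the product structure near both ends. Each boundary component is then totally geodesic, so $\mc=0$ on $\partial V$. Corollary~\ref{cor:meanconvex} applies to $V$ with boundary $\Sigma_1\sqcup\Sigma_2$: the dimension is even, and $\ker A=0$ because both $h_i$ have positive scalar curvature (by the boundary Lichnerowicz formula). Hence $\indaps(V,g_V)=0$, which proves the formula.

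The main point needing care is the bookkeeping of the gluing formula. One must check that \eqref{eq:apsgluing} holds for a manifold glued along one boundary component while another boundary component carries its own APS condition, and that the outward-normal conventions make the complementary spectral conditions match. The cited decomposition theorem of \cite{bb12} is local to the cut hypersurface, so I expect it to hold verbatim in this setting. If that is uncertain, there is an alternative route: cap off with a filling $W_3$ of $\overline{\Sigma_2}$ and compare the closed manifolds $W_1\cup V\cup W_3$ and $W_1\cup W_3$ via Lemma~\ref{lem:bdryinv}. This reduces everything to gluing along separating hypersurfaces in closed manifolds.
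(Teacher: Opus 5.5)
Your proposal is correct and follows the paper's proof essentially verbatim. You compute $\nu(\Sigma_2)$ via the filling $W_1\cup_{\Sigma_1}V$, then use the gluing formula \eqref{eq:apsgluing} and additivity of signature to get $\nu(\Sigma_2)-\nu(\Sigma_1)=\indaps(V,g_V)+\sigma(V)/8$, and kill $\indaps(V,g_V)$ with Corollary~\ref{cor:meanconvex}, since the product collars make $\partial V$ totally geodesic and $\ker A=0$ because $h_1,h_2$ are psc. One small slip: $\ker A=0$ comes from the Lichnerowicz formula $A^2=(\cons)^*\cons+\tfrac14\scs$ on $\Sigma_i$, not from the boundary identity \eqref{eq:bdrylich}.
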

\begin{proof}
    By Lemma \ref{lem:bdryinv}, one can compute $\nu(\Sigma_2)$ using $W \cup_{\Sigma_1}V$ where $W$ is a spin filling of $(\Sigma_1,h_1)$ with a product collar. This gives, by the gluing formula and additivity of signature, $$\nu(\Sigma_2)-\nu(\Sigma_1)= \indaps(V,g_V)+ \sigma(V)/8.$$
    The index term vanishes by Corollary \ref{cor:meanconvex} because the product collars make the boundary totally geodesic, hence of mean curvature $0$, and $\ker A=0$ since $h_1$ and $h_2$ are psc. 
\end{proof}

\subsection{Surgery argument} In this subsection we will use surgery to simplify $(W,\Sigma)$. Since $\Sigma$admits a psc metric, by Perelman's resolution of the geometrisation conjecture, $\Sigma = Y \#_k(S^2 \times S^1)$; cf. \cite[\S 1]{mar12}. Here, $Y$ is a connected sum of spherical space forms; in particular, it is a rational homology sphere. We shall first remove the $S^2 \times S^1$ pieces to go from $\Sigma$ to $Y$ without changing the boundary invariant $\nu$. Then, we will use the embedding in the four-ball to construct a rational homology ball $Z$ filling $Y$.

\begin{lemma} \label{lem:goingtoy}
    Let $\Sigma$ be an orientable closed three-manifold admitting a psc metric. Suppose a spin structure is fixed on $\Sigma$; then there is a spin cobordism $V$ from $\Sigma$ to a rational homology sphere  $Y$ such that $V$ admits a psc metric that is a product near both ends and $\sigma(V)=0$.
\end{lemma}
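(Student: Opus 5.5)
We will realize $V$ as the trace of a sequence of $2$-handle attachments, one for each $S^2\times S^1$ summand, each of which performs surgery along a circle $\{pt\}\times S^1$. By Perelman's geometrisation (as recalled above), $\Sigma \cong Y\#_k(S^2\times S^1)$ with $Y$ a connected sum of spherical space forms, and in particular a rational homology sphere. Let $\gamma_i$ be the circle $\{pt\}\times S^1$ in the $i$-th $S^2\times S^1$ summand. Zero-framed surgery on $\gamma_i$ (framing taken relative to the product framing $S^2\times S^1$ induces on $\gamma_i$) turns $S^2\times S^1$ into $S^3$. Doing this for all $i$ turns $\Sigma$ into $Y\#_k S^3\cong Y$.

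The cobordism is $V=(\Sigma\times[0,1])\cup\bigcup_{i=1}^k h_i$, where each $h_i=D^2\times D^2$ is a $2$-handle attached along a tubular neighbourhood of $\gamma_i\times\{1\}$.

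\textbf{Spin structure.} The first point is that the fixed spin structure $\mathfrak s$ on $\Sigma$ extends over $V$. A spin structure extends over a $2$-handle exactly when its restriction to the attaching circle, with the induced framing, is the one that bounds. The circle $\gamma_i$ carries two framings differing by one twist, and changing the framing by one flips the spin structure induced on the framed circle. So for each $i$ we choose whichever of the two framings (the $0$- or the $1$-framing) makes $\mathfrak s|_{\gamma_i}$ bound. In both cases the surgered manifold is still $S^3$: surgery on $\{pt\}\times S^1\subset S^2\times S^1$ with framing $n$ yields $S^3$ for either $n$, because the resulting meridian disk meets the $S^2$ once (equivalently, the Kirby picture is a $0$-framed unknot with a $\pm1$- or $0$-framed meridian, which cancels). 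Hence $V$ is a spin cobordism from $(\Sigma,\mathfrak s)$ to $Y$.

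\textbf{Signature.} The intersection form of $V$ vanishes. $H_2(V)$ is generated by $H_2(\Sigma)$, whose classes have self-intersection zero in the collar, together with possible new classes built from the cores of the $h_i$. A core can close up to a class only if $\gamma_i$ is rationally nullhomologous in $\Sigma$, and $[\gamma_i]$ has infinite order in $H_1(\Sigma)$. So the cores contribute nothing rationally, and $\sigma(V)=0$. Alternatively, $V$ is the boundary connected sum of a collar of $Y$ with $k$ copies of the trace of surgery on $S^1\times S^2$, and each of these traces has zero signature.

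\textbf{Positive scalar curvature.} The metric comes from Gromov--Lawson/Schoen--Yau surgery in the relative form: a $2$-handle attached to a $3$-manifold has codimension $4-2+1=3\geq 3$ as surgery, and the Gromov--Lawson construction gives a psc metric on the trace that is a product near both ends, starting from a psc metric on $\Sigma$. The paper fixes $(\Sigma,h)$ only up to spin structure here, and Bamler--Kleiner lets us change $h$ freely within psc metrics, so we may start from the standard connected-sum psc metric, or from any given psc $h$ after inserting a psc concordance. The resulting psc metric on $Y$ is whatever the construction produces, which suffices.

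\textbf{Main obstacle.} We expect the delicate point to be the spin-framing bookkeeping, namely checking that the framing forced by the spin structure still yields $S^3$ and hence a rational homology sphere at the far end. This is handled by the observation above that both framings of $\{pt\}\times S^1$ give $S^3$.
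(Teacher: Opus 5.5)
\textbf{The positive scalar curvature step has a genuine gap.} The Gromov--Lawson surgery theorem, and its extension to the trace, needs the surgered sphere to have codimension at least $3$ in the $3$-manifold being surgered. Your attaching circles $\gamma_i\subset\Sigma$ have codimension $3-1=2$ there. The number $3$ you computed is the codimension of $\gamma_i$ in the $4$-dimensional trace, which plays no role.

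Reading $V$ from the other end does not help. A $2$-handle in a $4$-dimensional cobordism is again a $2$-handle upside down, attached along a circle in $Y$, so again codimension $2$. In dimension three, Gromov--Lawson only allows $1$-handles attached to the end carrying the psc metric.

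The general principle you invoke is in fact false. $T^3$ is $0$-surgery on the Borromean rings in $S^3$, and the resulting $2$-handle cobordism from $S^3$ to $T^3$ carries no psc metric that is a product near the ends, since $T^3$ has no psc metric. So, as written, nothing produces the psc metric on your $V$. The delicate point is this, not the framing bookkeeping.

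\textbf{What is fine, and how to repair it.} Your spin-framing and signature arguments are correct, and as you note, either framing still yields $S^3$.

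The $2$-handle route can be rescued by hand.
\begin{enumerate}
\item Each elementary piece, the trace of surgery on $\{pt\}\times S^1\subset S^2\times S^1$, becomes $D^4$ after gluing $S^1\times D^3$ to its bottom end, because the $2$-handle cancels the $1$-handle.
\item So the piece is the complement of a neighbourhood of a circle in $D^4$, i.e. $(S^2\times D^2)\setminus B^4$.
\item Round $S^2$ times a torpedo disk, with a small ball removed via Gromov--Lawson, is psc and a product near both ends.
\item These pieces are joined to $Y\times[0,1]$ by connected sums along arcs, which is a codimension-$3$ operation.
\end{enumerate}

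The paper avoids all this by attaching $3$-handles along the spheres $S^2\times\{pt\}$ instead.
\begin{itemize}
\item Surgery on this nonseparating sphere in $S^2\times S^1$ also gives $S^3$, so the far end is still $Y$.
\item The spin structure extends over $3$-handles automatically.
\item Read from $Y$, these are $1$-handles, i.e. codimension-$3$ surgeries on $S^0\subset Y$. So Gromov--Lawson applies directly, starting from a psc metric on $Y$.
\item Having only $1$-handles on a rational homology sphere gives $H_2(V;\QQ)=0$, hence $\sigma(V)=0$.
\end{itemize}

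This choice is also what Lemma \ref{lem:boundsball} relies on: the $3$-handles kill $H_2(W;\QQ)$. Your $2$-handles can instead create new second homology in $W\cup_\Sigma V$; for $W=S^2\times D^2$, the curve $\gamma$ bounds a disk in $W$. So your cobordism would not plug into the rest of the argument unchanged.
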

\begin{proof}
    Since $\Sigma = Y \#_k(S^2 \times S^1)$, we attach $3$-handles to $\Sigma \times [0,1]$ along each $S^2 \times \{pt\}$ generating $H_2(\Sigma;\QQ)$. This gives the required cobordism as the spin structure extends over the handles. 
    Now flip the cobordism to view it as a cobordism from $Y$ to $\Sigma$; it has only $1$-handles. Therefore since $Y$ is a rational homology sphere, $H_2(V;\QQ)=0$ and $\sigma(V)=0$. Starting from a psc metric on $Y$, since the cobordism to $\Sigma$ is a Gromov-Lawson cobordism ($1$-handles are surgeries of codimension $3$), its trace admits a psc metric as well, which can be made a product near the ends. 
\end{proof}
\begin{corollary}
    $\nu(\Sigma) = \nu(Y)$.
\end{corollary}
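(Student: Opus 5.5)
This is a direct application of Corollary \ref{cor:nunotchange} to the cobordism produced by Lemma \ref{lem:goingtoy}. Fix the spin structure on $\Sigma$ and let $V$ be the spin cobordism from $\Sigma$ to $Y$ given by Lemma \ref{lem:goingtoy}. It carries a psc metric $g_V$ that is a product near both ends and has $\sigma(V)=0$. The spin structure on $V$ restricts to a spin structure on $Y$, and that restricted spin structure is the one we use for $\nu(Y)$. Write $h_\Sigma$ and $h_Y$ for the boundary metrics induced by $g_V$.

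Both boundary metrics have positive scalar curvature. Because $g_V$ is a product near each end, $\scal_{V}$ restricted to a collar equals the scalar curvature of the slice, so $h_\Sigma$ and $h_Y$ are psc. Hence $\nu(\Sigma)$ and $\nu(Y)$ are both defined. By Lemma \ref{lem:bdryinv}, they may be computed with respect to $h_\Sigma$ and $h_Y$, whatever psc metrics were fixed before.

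Next check orientations. Lemma \ref{lem:goingtoy} presents $V$ as a cobordism from $\Sigma$ to $Y$, obtained from $\Sigma\times[0,1]$ by attaching $3$-handles. This is the orientation convention of Corollary \ref{cor:nunotchange}: $V$ is glued onto a filling of $\Sigma_1=\Sigma$ to give a filling of $\Sigma_2=Y$. Corollary \ref{cor:nunotchange} then gives
\[
\nu(Y)-\nu(\Sigma)=\sigma(V)/8=0 .
\]

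The only point needing care is that the hypotheses of Corollary \ref{cor:nunotchange} hold exactly: $g_V$ is a product near both ends, it is psc, and the spin structures agree at the ends. All three are supplied by Lemma \ref{lem:goingtoy} together with the positivity of the induced end metrics checked above. There is no analytic obstacle, and the statement follows.
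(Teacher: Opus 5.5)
Your proof is correct and is the paper's argument: apply Corollary~\ref{cor:nunotchange} to the cobordism $V$ from Lemma~\ref{lem:goingtoy} and use $\sigma(V)=0$. Your additional checks are left implicit in the paper's one-line proof: that the product-end slices are psc, that Lemma~\ref{lem:bdryinv} lets you use the boundary metrics induced by $g_V$, and that $Y$ carries the spin structure induced by $V$. The orientation check is harmless but unnecessary, since the signature term vanishes anyway.
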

\begin{proof}
    Follows from Corollary \ref{cor:nunotchange} as $\sigma(V)=0$.
\end{proof}

We now construct a rational homology ball filling $Y$. 

\begin{lemma}\label{lem:boundsball}
 Let $W$ be a compact domain in a four-ball $B$, with a smooth boundary $(\Sigma,h)$ that admits a psc metric. Then, for $V$ and $Y$ as in Lemma \ref{lem:goingtoy}, $Y$ bounds a spin rational homology ball with spin structure induced by $V$.
\end{lemma}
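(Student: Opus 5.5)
The plan is to build the rational homology ball directly from the embedding $\Sigma \subset B \subset S^4$, using the $3$-handles of Lemma \ref{lem:goingtoy} in a way that stays inside $S^4$. Regard $B \subset S^4$, so that $\Sigma$ separates $S^4$ into $W$ and $W' = S^4 \setminus \mathrm{int}(W)$. Both pieces are spin, since $S^4$ is, and the spin structures they induce on $\Sigma$ agree with the one coming from $B$. Mayer--Vietoris for $S^4 = W \cup_\Sigma W'$ with $\mathbf{Q}$ coefficients gives
\[
H_*(\Sigma;\mathbf{Q}) \cong H_*(W;\mathbf{Q}) \oplus H_*(W';\mathbf{Q}) \quad \text{for } * = 1,2.
\]
Hence $H_2(\Sigma;\mathbf{Q}) \cong \mathbf{Q}^k$ splits as $K \oplus K'$, where $K = \ker(H_2(\Sigma)\to H_2(W))$ and $K' = \ker(H_2(\Sigma)\to H_2(W'))$. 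Moreover $H_2(W;\mathbf{Q})=H_2(W';\mathbf{Q})=0$ after accounting for these kernels; in fact $H_2(W)\oplus H_2(W') \cong H_2(\Sigma)$, and the signature of each piece vanishes because the intersection forms live on the image of $H_2(\Sigma)$.

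Next, attach $V$ to whichever side is convenient. The handles of $V$ are $3$-handles attached along the spheres $S^2 \times \{pt\}$ in the summands $S^2\times S^1$. The first idea is to take $Z = W \cup_\Sigma V$, which has boundary $Y$, with the spin structure on $Y$ induced from $V$. Compute $H_*(Z;\mathbf{Q})$: gluing $3$-handles along the spheres kills the classes $[S^2\times pt]$ in $H_2$ and leaves $H_3$ unaffected unless some of these classes already die in $W$, in which case it creates $H_3$. In that case $H_1(Z)$ is still nonzero via the dual circles $pt \times S^1$ that bound in $W'$ but not in $W$. This mismatch is why a single side does not suffice.

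The main obstacle is that $Z$ need not be a rational ball. To fix this, I would rechoose the handle attachments using the Mayer--Vietoris splitting rather than the arbitrary spheres $S^2\times pt$: choose generators so that $k_1$ of the spheres lie in $K'$ and the corresponding circles bound in $W'$. I would aim to show that the rational homology of $W$ is generated by $H_1$ classes dual to $K'$. Then $Z := W \cup (\text{3-handles along } K')\cup (\text{2-handles killing } H_1(W)\text{ summands dual to } K)$, equivalently $W \cup_\Sigma V$, where $V$ is the cobordism realized inside $W'$ viewed as a union of handles. By construction this $Z$ has $H_1=H_2=H_3=0$ rationally. In other words, the concrete plan is to show that $W \cup_\Sigma V$, for the $V$ whose $3$-handles are chosen according to this splitting, is a rational homology ball, and to check via a long exact sequence that the spin structure restricts to the one induced by $V$ on $Y$.

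If the handle bookkeeping fails, the fallback is a direct appeal to the fact that any $Y$ with $Y \# \#_k (S^2\times S^1) \hookrightarrow S^4$ bounds a rational ball. I would obtain this by doing surgery on $W$ along the circles realizing $H_1(W;\mathbf{Q})$ to get a rational ball bounding $Y$. Spinness in this construction would follow because $H^1(\,\cdot\,;\mathbf{Z}/2)$ obstructions are controlled by the spin structure induced from $S^4$.
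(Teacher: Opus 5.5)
There is a genuine gap: neither your main plan nor your fallback actually produces a rational homology ball bounded by $Y$. In fact your first idea, $Z_1=W\cup_\Sigma V$, is exactly where the paper starts, and you abandoned it too early. Mayer--Vietoris does not give $H_2(W;\QQ)=0$; that is false as written, e.g.\ for a standard $S^2\times D^2\subset B$ with $\Sigma=S^2\times S^1$. What it gives is surjectivity of $H_2(\Sigma;\QQ)\to H_2(W;\QQ)$. Hence the $3$-handles of $V$ kill all of $H_2$ and $b_2(Z_1)=0$. The only defect of $Z_1$ is $b_1(Z_1)=b_3(Z_1)$, the equality coming from Lefschetz duality because $\partial Z_1=Y$ is a rational homology sphere.

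The missing step is to remove this defect in the interior, without touching $Y$. Do surgery on disjoint interior circles spanning $H_1(Z_1;\QQ)$, using the framing (one of two, as $\pi_1\mathrm{SO}(3)=\mathbf{Z}/2$) over which the spin structure extends. Each surgery lowers $b_1$ by one and raises $\chi$ by $2$. Since $b_3=b_1$ persists, $\chi=1-2b_1+b_2$, so $b_2$ stays $0$ while $b_1=b_3$ drops to $0$. This is the paper's argument. Your fallback does mention circle surgery, but it has three problems: it applies the surgery to $W$, whose boundary is still $\Sigma$ rather than $Y$; it gives no reason why $H_2$ and $H_3$ end up zero; and it otherwise appeals to the very statement being proved.

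The ``balanced'' main plan, as written, rests on incorrect homology. One has $H_1(W;\QQ)\cong H_1(\Sigma;\QQ)/\mathrm{Ann}(K)\cong K^{*}$, which is dual to $K$ rather than $K'$ (your text switches between the two), and $H_2(W;\QQ)\cong H_2(\Sigma;\QQ)/K$. Also, $\ker(H_1(\Sigma)\to H_1(W'))=\mathrm{Ann}(K')$ by the half-lives-half-dies argument, so the circles dual to spheres in $K'$ cannot bound in $W'$. The claim `$H_1=H_2=H_3=0$ by construction' is therefore unsupported. You also never check that the $2$-handle circles are disjoint from the $3$-handle spheres, which is needed for the new boundary to be $Y$. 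Finally, a cobordism with $2$-handles is not the $V$ of Lemma \ref{lem:goingtoy}, so `equivalently $W\cup_\Sigma V$' is false.

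The idea can be salvaged. Choose a subset of the $S^2\times S^1$ summands whose spheres span a complement of $K$. Attach $3$-handles along those spheres, and spin-framed $2$-handles along the circles $\{pt\}\times S^1$ of the remaining summands. These circles then map isomorphically onto $H_1(W;\QQ)$, and the exact sequences of the two handle attachments show the result is a rational ball with boundary $Y$. It carries the spin structure induced by $V$, because all handles sit in the $S^2\times S^1$ summands. That would be a legitimate alternative that avoids interior surgery, but it has to be argued; as submitted it is not.
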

\begin{proof}
    Embed $B$ into $S^4$ and observe that $S^4 = W \cup_{\Sigma}W^c$. By Mayer-Vietoris, we get a surjection $H_2(\Sigma;\QQ) \to H_2(W;\QQ)$. Therefore, $H_2(W;\QQ)$ is actually spanned by the images of the two spheres $S^2 \times \{pt\}$ that span $H_2(\Sigma;\QQ)$. So attaching the $3$-handles that created $V$ gives $Z_1= W \cup_{\Sigma}V$ with $b_2(Z_1)=0$ and boundary $Y$.
    We now kill the $b_1$. To this end, we choose disjoint interior circles that span $H_1(Z_1;\QQ)$. We perform $1$-surgery along each of these such that the chosen framings extend the spin structure. This takes $b_1$ to zero. However, each such surgery step increases the Euler characteristic by $2$ because it replaces $S^1 \times D^3$  by $D^2 \times S^2$. Since the boundary $Y$ is a rational homology sphere and is not affected, the Poincar\'e-Lefschetz duality and the exact sequence of pairs give $b_3(Z)=b_1(Z)$ at each step. Therefore $\chi = 1-2b_1+b_2$ and hence $b_2$ remains unchanged. Therefore, the resulting spin manifold $Z$ has boundary $Y$ and $b_1(Z)=b_2(Z)=b_3(Z)=0$.
\end{proof}

\subsection{Index vanishing} We will use the following proposition due to Baraglia \cite[Proposition 3.4]{bara} in our spin case. 
\begin{prop}[Baraglia]
\label{prop_Froyshov-Baraglia}
    Let $Z$ be a compact spin four-manifold with $b_1=b_2=0$. Suppose its boundary $(Y,h)$ is a rational homology sphere with $\sh>0$. Suppose the metric on $Z$ has a product collar near $Y$. Then we have $\indaps (Z,g)=0$.  
\end{prop}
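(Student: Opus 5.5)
The plan is to argue with Seiberg--Witten theory on the manifold with a cylindrical end, in the spirit of Frøyshov's inequality. I will show $\indaps(Z,g)\le 0$ for every $Z$ satisfying the hypotheses, and then apply the same bound to the orientation reversal $\overline{Z}$. The hypotheses are symmetric under orientation reversal: $b_1=b_2=0$, the boundary is a rational homology sphere, and $\overline{Y}$ still carries the psc metric $h$. Moreover $\indaps(\overline{Z},g)=-\indaps(Z,g)$, as already used in Lemma \ref{lem:bdryinv}. The two inequalities together then force $\indaps(Z,g)=0$. Note that $b_2=0$ gives $b^+(Z)=b^-(Z)=0$, which is exactly what makes the argument work for both orientations.

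\textbf{Step 1: pass to the cylinder.} Since $g$ is a product near $Y$, attach the half-cylinder $Y\times[0,\infty)$ to obtain $\hat Z$. Since $\sh>0$, we have $\ker A=0$, so $\indaps(Z,g)$ equals the $L^2$-index $k$ of the complex Dirac operator $D^+$ on $\hat Z$. Suppose for contradiction that $k>0$.

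\textbf{Step 2: analyse the limits on $Y$.} On $Y$ the psc metric and the Weitzenböck formula force every Seiberg--Witten solution to have zero spinor. Since $H^1(Y;\RR)=0$, the flat spin$^c$ connection is unique up to gauge. It is nondegenerate, because the Dirac operator has $\ker A=0$ and $H^1(Y)=0$. Consequently the only possible asymptotic limit on the end is this single reducible $\theta$.

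\textbf{Step 3: describe the moduli space.} Consider the moduli space $\mathcal{M}$ of finite-energy monopoles on $\hat Z$ for the spin structure, modulo gauge, with the spin connection as background. Since $b_1=b^+=0$, the expected dimension is $2k-1$. The anti-self-dual equation has no obstruction, so there is a unique reducible solution $\rho$ asymptotic to $\theta$. After a small generic perturbation, its neighbourhood in $\mathcal{M}$ is a cone on $\mathbf{CP}^{k-1}$, modelled on $\ker D^+/S^1$. Away from $\rho$, irreducibles are cut out transversally. Compactness follows from the a priori curvature bounds together with Step 2: broken trajectories would require flows between distinct critical points on $Y$, but $\theta$ is the only one. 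Removing a small cone neighbourhood of $\rho$ therefore yields a compact manifold with boundary $\mathbf{CP}^{k-1}$, over which the base-point $S^1$-bundle extends. Pairing $c_1^{\,k-1}$ with $[\mathbf{CP}^{k-1}]$ gives $1\neq 0$, contradicting the fact that $\mathbf{CP}^{k-1}$ is null-bordant over this bundle. Hence $k\le 0$.

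\textbf{Main obstacle.} I expect the hardest step to be the compactness and gluing analysis on the cylindrical end in Step 3. One must ensure that no energy escapes down the end and that perturbations can be chosen to preserve both the product structure and the nondegeneracy of $\theta$. I would handle this with exponential decay estimates that follow from $\ker A=0$ and $H^1(Y)=0$, as in Frøyshov's and Baraglia's setups. A cleaner alternative is to cite the Frøyshov inequality directly: for spin $X$ with $b^+=0$ bounding $Y$, it gives $\indaps(X)\le \delta(Y)$, and the correction term $\delta$ vanishes at a psc metric because the Floer complex consists of the reducible alone. Applying this inequality to $Z$ and $\overline{Z}$ gives the result.
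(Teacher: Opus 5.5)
Your proposal is correct in outline, but it takes a genuinely different route from the paper.

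\textbf{The paper's route.} The paper works inside Kronheimer--Mrowka's monopole Floer homology. Deleting a ball from $Z$ gives a rational homology cobordism from $S^3$ to $Y$, so the Fr\o yshov invariant $h(Y,\mathfrak{s})$ vanishes. Because $Y$ is psc, the top-degree element in the image of $p_*$ is carried by a reducible critical point. Its absolute grading is a cohomological constant plus $2\,\indaps D_A$. Hence $\indaps D_A$ is independent of all the choices, and equals its value $0$ for $(D^4,S^3)$.

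\textbf{Your route.} You prove from scratch the psc case of Fr\o yshov's inequality, namely $\indaps\le 0$ whenever $b_1=b^+=0$, by the Donaldson-type cone argument on $\hat Z$. You then exploit $b^-(Z)=0$ by applying the same bound to $\overline{Z}$. This works precisely because the only possible asymptotic limit is the nondegenerate reducible $\theta$. No trajectories can break off, so the unblown-up moduli space is already compact and no Floer theory or blow-up is needed.

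\textbf{Comparison.} Your argument is more elementary, essentially Fr\o yshov's original argument for psc boundaries. It needs neither the normalization at $D^4$ nor rational homology cobordism invariance of $h$. The price is that you must supply the analysis yourself:
\begin{itemize}
\item weighted spaces that account correctly for $H^0(Y)=\RR$, so that the framed tangent space at $\rho$ is exactly $\ker D^+$ and the unframed dimension is $2k-1$;
\item uniform exponential decay near $\theta$, so that $C^\infty_{\mathrm{loc}}$ limits are genuine limits in $\mathcal{M}$;
\item either a perturbation making $\mathrm{coker}\,D^+_{A_\rho}=0$, or an equivariant obstruction argument if it cannot be killed.
\end{itemize}
The paper outsources all of this to the book. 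As a by-product it obtains an expression for $\indaps D_A$ through $h(Y,\mathfrak{s})$. Your argument gives, as a by-product, $\indaps\le 0$ for every filling with $b_1=b^+=0$.

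\textbf{A caveat on your ``cleaner alternative''.} The Fr\o yshov invariant itself need not vanish for psc rational homology spheres; the Poincar\'e sphere is a counterexample. What is true is that for psc $Y$ it is computed by the metric-independent quantity $\indaps D_A-\tfrac18\bigl(c_1(\mathfrak t)^2-\sigma\bigr)$ of any filling. Substituting this into Fr\o yshov's inequality gives $\indaps\le 0$. So your $\delta$ must mean the difference of these two quantities, not $h$ itself, and the reason it vanishes is this identification rather than merely ``the complex consists of the reducible alone''.
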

    Baraglia proved more general results than Proposition \ref{prop_Froyshov-Baraglia}, and the arguments in \cite{bara} are more involved than is necessary for our case. We present a direct proof of Proposition \ref{prop_Froyshov-Baraglia} here using only the techniques developed in \cite{kronheimer2007monopoles}.
\begin{proof}
 We consider a slightly extended case (which is also a special case of Baraglia's result), where $Z$ is endowed with a spin$^c$ structure $\mathfrak{t}$, and the spin$^c$ structure $\mathfrak{t}$ on $Z$ induces a spin$^c$ structure $\mathfrak{s}$ on $Y$. Let $A_0$ be a spin$^c$ connection on $Y$ that induces a flat connection on the determinant line bundle, and let $A$ be a spin$^c$ connection on $Z$ that is equal to the pull-back of $A_0$ from $Y$ on the product collar. Let $D_A$ denote the Dirac operator associated with $A$. We show that 
 \begin{equation}
 \label{eqn_indaps_DA=0}
 \indaps D_A=0.
 \end{equation}

    If $Y=S^3$, $Z=D^4$, then the desired result follows from a straightforward calculation using the APS index theorem. We will show that $\indaps D_A$ does not depend on the choices of $Y$, $\mathfrak{s}$, $Z$, $\mathfrak{t}$, $A_0$, $A$. In the following, let $\mathfrak{s}_0$ denote the unique spin$^c$ structure on $S^3$.

    Recall that \cite{kronheimer2007monopoles} introduced three versions of monopole Floer homology for a closed oriented $3$-manifold $Y$ with a spin$^c$ structure $\mathfrak{s}$, which are denoted by 
    $\widecheck{\mathit{HM}}_{*}(Y,\mathfrak{s}), \widehat{\mathit{HM}}_{*}(Y,\mathfrak{s}),\overline{\mathit{HM}}_{*}(Y,\mathfrak{s})$. In the notation of \cite{kronheimer2007monopoles}, these are $\mathbf{Z}[U_\dagger]$ modules, where the action of $U_\dagger$ has degree $-2$.
    
    If $Y$ is a rational homology sphere, then there is exactly one reducible solution to the Seiberg-Witten equation on $(Y,\mathfrak{s})$, and hence we have 
    \[
    \overline{\mathit{HM}}_{*}(Y,\mathfrak{s})\cong \overline{\mathit{HM}}_{*}(S^3,\mathfrak{s}_0)\cong \mathbf{Z}[U_\dagger,U_\dagger^{-1}]
    \]
    as $\mathbf{Z}[U_\dagger]$-modules.
    If we further assume that $Y$ has positive scalar curvature, then there are no irreducible flow lines under sufficiently small perturbations, which implies that
    \[
    \widehat{\mathit{HM}}_{*}(Y,\mathfrak{s})\cong \widehat{\mathit{HM}}_{*}(S^3,\mathfrak{s}_0)\cong \mathbf{Z}[U_\dagger],
    \]
    \[
    \widecheck{\mathit{HM}}_{*}(Y,\mathfrak{s})\cong \widecheck{\mathit{HM}}_{*}(S^3,\mathfrak{s}_0)\cong \mathbf{Z}[U_\dagger,U_\dagger^{-1}]/\mathbf{Z}[U_\dagger].
    \]
    as $\mathbf{Z}[U]$-modules. We refer the reader to \cite[Proposition 36.1.3]{kronheimer2007monopoles} for more details.

    For a general $3$-manifold $Y$, there is an exact triangle (\cite[Proposition 22.2.1]{kronheimer2007monopoles})
    \begin{equation}
        \label{eqn_exact_sequence_HM}
\cdots \xrightarrow{j_*} \widehat{\mathit{HM}}_{*}(Y,\mathfrak{s}) \xrightarrow{p_*}  \overline{\mathit{HM}}_{*}(Y,\mathfrak{s}) \xrightarrow{i_*}  \widecheck{\mathit{HM}}_{*}(Y,\mathfrak{s}) \xrightarrow{j_*} \widehat{\mathit{HM}}_{*}(Y,\mathfrak{s}) \xrightarrow{p_*} \cdots 
    \end{equation}
    There is also a canonical rational grading for the monopole Floer homology of $(Y,\mathfrak{s})$ when $c_1(\mathfrak{s})$ is torsion (\cite[Section 28.3]{kronheimer2007monopoles}). 
    
    If $Y$ is a rational homology sphere, then the \emph{Fr\o yshov invariant} $h(Y,\mathfrak{s})$ is defined to be the rational number such that 
    \begin{equation}\label{defn_Froyshov}
    -2h(Y,\mathfrak{s})-2
    \end{equation}
    is equal to the highest degree in the image of $p_*$ in \eqref{eqn_exact_sequence_HM}. The normalization coefficients in \eqref{defn_Froyshov} are chosen so that $h(S^3,\mathfrak{s}_0)=0$ and that as $Y$ ranges over all integer homology spheres, the invariant $h(Y,\mathfrak{s})$ takes every integer value.

    If $(Y_1,\mathfrak{s}_1)$ and $(Y_2,\mathfrak{s}_2)$ are rational homology spheres with spin$^c$ structures, and if there exists a spin$^c$ cobordism $(W,\mathfrak{t})$ from $Y_1$ to $Y_2$ such that $b_1(W)=b_2(W)=0$ (such a cobordism is called a \emph{rational homology cobordism}), then it is known that $h(Y,\mathfrak{s}_1)=h(Y_2,\mathfrak{s}_2)$. This can be seen from the functoriality of monopole Floer homology; we sketch the argument here for the reader's convenience: The cobordism $(W,\mathfrak{t})$ induces homomorphisms from the monopole Floer homology of  $(Y_1,\mathfrak{s}_1)$ to the monopole Floer homology of $(Y_2,\mathfrak{s}_2)$. If $W$ is a rational homology cobordism, then the induced homomorphisms are homogeneous with degree zero, and a direct computation of reducible solutions shows that it induces an isomorphism on $\overline{\mathit{HM}}_{*}$. The homomorphisms are also commutative with the maps in \eqref{eqn_exact_sequence_HM}. By a straightforward algebraic argument, this implies $h(Y_1,\mathfrak{s}_1)\ge h(Y_2,\mathfrak{s}_2)$. Reversing the orientation of $W$ yields the reversed inequality.

    Now let $Y$, $Z$ be given as in the statement of Proposition \ref{prop_Froyshov-Baraglia}, and assume that $\mathfrak{s}$ is a spin$^c$ structure on $Y$, and $\mathfrak{t}$ is a spin$^c$ structure on $Z$ that induces the boundary spin$^c$ structure $\mathfrak{s}$. Since every spin structure defines a spin$^c$ structure, this setting is more general than the original setup of Proposition \ref{prop_Froyshov-Baraglia}.

    Let $W$ be obtained by removing an interior open ball from $Z$. Then $W$ is a rational homology cobordism from $S^3$ to $Y$. Therefore, 
    \[
    h(Y,\mathfrak{s}) = h(S^3,\mathfrak{s}_0) = 0.
    \]
    
    Since $Y$ is a rational homology sphere with a psc metric, the elements with the highest degree in the image of $p_*$ have the form $k[\mathfrak{a}]$, where $k$ is a non-zero integer and $\mathfrak{a}$ is the boundary-unstable solution with the highest degree. The absolute grading of $\mathfrak{a}$ is defined by \cite[Definition 28.3.1]{kronheimer2007monopoles}, and it has the form 
    \[
    \deg \mathfrak{a} = x_1 + \dim^{\text{vir}} \mathcal{M}(Z,\mathfrak{a}),
    \]
    where $x_1\in \mathbf{Q}$ is a constant that only depends on the cohomology of $(Z,Y)$, and $\mathcal{M}(Z,\mathfrak{a})$ denotes the moduli space of solutions to the Seiberg-Witten equations on $Z\cup Y\times [0,+\infty)$ that converge to $\mathfrak{a}$ on the end. The virtual dimension formula of $\mathcal{M}(Z,\mathfrak{a})$ has the form 
    \begin{equation}
    \label{eqn_vir_dim_formula}
\dim^{\text{vir}} \mathcal{M}(Z,\mathfrak{a}) = x_2 + 2\,\indaps D_A,
    \end{equation}
    where $x_2\in \mathbf{Q}$ is a constant that only depends on the cohomology of $(Z,Y)$, and $A$ is an arbitrary spin$^c$ connection on $(Z,\mathfrak{t})$ that agrees with $\mathfrak{a}$ on a collar neighborhood of $Y$. The factor of $2$ on the APS index in \eqref{eqn_vir_dim_formula} comes from the fact that the APS index is defined over $\mathbf{C}$, while the virtual dimension is defined over $\mathbf{R}$. 

    Therefore, we have
    \[
    2\,\indaps D_A + (x_1 + x_2) = \deg \mathfrak{a} = -2 h(Y,\mathfrak{s}) - 2 = -2.
    \]
    As a consequence, $\indaps D_A$ does not depend on the choices of $Y$, $\mathfrak{s}$, $Z$, $\mathfrak{t}$, $A_0$, $A$, as long as the cohomology of $(Z,Y)$ satisfies the above conditions, and $A_0$ is a spin$^c$ connection on $Y$ that induces a flat connection on the determinant line bundle, and $A$ is a spin$^c$ connection on $Z$ that is equal to the pull-back of $A_0$ from $Y$ on the product collar. Note that if $\mathfrak{t}$ is given by a spin structure on $Z$ and $A$ is given by the spin connection, then the condition on the connection is satisfied. Since \eqref{eqn_indaps_DA=0} holds when $(Z,Y)=(D^4,S^3)$, we conclude that the same equation holds for arbitrary $Y$, $\mathfrak{s}$, $Z$, $\mathfrak{t}$, $A_0$, $A$ satisfying the above assumptions. 
\end{proof}

\begin{corollary}\label{cor:nuvanishes}
   $\nu(Y)=0$.
\end{corollary}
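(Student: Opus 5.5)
The plan is to compute $\nu(Y)$ using the specific filling furnished by Lemma \ref{lem:boundsball}. By Lemma \ref{lem:bdryinv}, $\nu(Y)$ may be evaluated on any spin filling of $Y$ that induces the given spin structure, equipped with any metric extending a psc metric on $Y$.

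Lemma \ref{lem:boundsball} gives a spin rational homology ball $Z$ with $\partial Z = Y$. Its spin structure is induced by $V$, and hence agrees with the spin structure on $Y$ relevant for the corollary $\nu(\Sigma)=\nu(Y)$. Since $Y$ is a connected sum of spherical space forms, it carries a psc metric $h_Y$. For instance, $Y$ is the end of the Gromov--Lawson cobordism $V$ built in Lemma \ref{lem:goingtoy}, where the psc metric is a product near the ends.

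Equip $Z$ with a metric $g$ that restricts to $h_Y$ on $Y$ and is a product on a collar of $Y$. Then $\nu(Y) = \indaps(Z,g) + \tfrac18\sigma(Z)$. Because $b_2(Z)=0$, the intersection form is trivial, so $\sigma(Z)=0$.

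For the index term, note that $Z$ is spin with $b_1(Z)=b_2(Z)=0$, that $Y$ is a rational homology sphere with $\scal_{h_Y}>0$, and that $g$ is a product near $Y$. These are exactly the hypotheses of Proposition \ref{prop_Froyshov-Baraglia}, which gives $\indaps(Z,g)=0$. Also, $\ker A=0$ on $Y$ by the Lichnerowicz formula, so $\nu$ is well defined here. Hence $\nu(Y)=0$.

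The only delicate point is bookkeeping: the spin structure on $Y$ used to define $\nu(Y)$ must match the one on $\partial Z$. This holds because $Z$ is constructed from $W\cup_\Sigma V$ by interior spin-compatible surgeries, so its boundary spin structure is the one induced from $V$.
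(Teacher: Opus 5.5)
Your proposal is correct and follows the paper's argument. You evaluate $\nu(Y)$ on the spin rational homology ball $Z$ from Lemma \ref{lem:boundsball}, use Proposition \ref{prop_Froyshov-Baraglia} to get $\indaps(Z,g)=0$, and note that $\sigma(Z)=0$ because $b_2(Z)=0$; your checks on the boundary spin structure and the psc product-collar metric make explicit what the paper's one-line proof leaves implicit.
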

\begin{proof}
    This reduces to $\nu(Y)=\sigma(Z)/8$, but that is zero because $Z$ is a rational homology ball.
\end{proof}
\begin{proof}[Proof of Theorem \ref{thm:fillingindex}]
    By Lemma \ref{lem:spinindexmetric}, $\ker A=0$. Since $W$ lies in a four-ball, its intersection pairing vanishes and therefore $\sigma(W)=0$. Thus, $$\indaps(W,g)=\nu(\Sigma)=\nu(Y)=0.$$
The second equality is by Corollary \ref{cor:nunotchange} and the third is from Corollary \ref{cor:nuvanishes}.  
\end{proof}

\section{Proof of Theorem \ref{maintheorem}} \label{sec:final}

\begin{proof}[Proof of Theorem \ref{maintheorem}]
    Suppose $g$ is a complete metric on $X \backslash \{p\}$ with $\scal \geq \kappa^2$. Let $B$ be a coordinate ball about $p$. Then Proposition $\ref{prop:spectralgap}$ gives a hypersurface $\Sigma$ in the end $B\backslash \{p\}$  with $\lambda_1(\mathcal{L}_{\Sigma})>0$. We write $(W,\Sigma,g)$ for the compact part with boundary $\Sigma$. Compactifying the complement of $W$ at $p$ gives a compact domain $(C,\Sigma,\hat g)$ lying inside the ball $B$. Here, $\hat g$ is some smooth metric on $X$ that agrees with $g$ on $W$ and on a collar of $\Sigma$ in $C$. By Lemma \ref{lem:agap}, the boundary Dirac operator of $\Sigma$ is invertible. By the gluing formula \ref{eq:apsgluing} along $\Sigma$ and the index theorem for the closed manifold $X$, we have $$\widehat A(X) = \indaps(W,g)+\indaps(C,\hat g).$$
We now show that both terms on the right side are zero. First, since, $\lambda_1(\mathcal{L}_{\Sigma})>0$ by Theorem \ref{thm:indexwvanish}, $\indaps(W,g) =0.$ Again, since  $\lambda_1(\mathcal{L}_{\Sigma})>0$, $g_{\Sigma}$ is conformal to a psc metric $h$. Along a conformal deformation, the boundary Dirac operator stays invertible, so the index doesn't change. Since $h$ is psc, and $(C,\Sigma)$ lies in a four ball $B$, by Theorem \ref{thm:fillingindex} $\indaps(C,\hat g)=0.$ Combining both gives that $X$ has $\widehat A(X) = 0$. Contradiction. 
    
\end{proof}
\bibliography{bib}
\bibliographystyle{alpha}
\end{document}